\theoremstyle{definition}
\newtheorem{definition}{Definition}[section]
\theoremstyle{plain}
\newtheorem{theorem}[definition]{Theorem}
\newtheorem{proposition}[definition]{Proposition}
\newtheorem{lemma}[definition]{Lemma}
\newtheorem{corollary}[definition]{Corollary}
\theoremstyle{remark}
\newtheorem{remark}[definition]{Remark}
\numberwithin{equation}{section}
\newcommand\keywords[1]{\textbf{keywords}:#1}
\begin{document}

\title{From quantum loop superalgebras to super Yangians}
\author{Hongda Lin ${}^1$ Yongjie Wang ${}^2$ and Honglian Zhang ${}^{1,3}$}
\date{}
\maketitle

\begin{center}
\footnotesize
\begin{itemize}
\item[1] Department of Mathematics, Shanghai University, Shanghai 200444, China.
\item[2] School of Mathematics, Hefei University of Technology, Hefei, Anhui, 230009, China.
\item[3] Newtouch Center for Mathematics at Shanghai University, Shanghai 200444, China.
\end{itemize}
\end{center}

\begin{abstract}
The goal of this paper is to generalize a statement by Drinfeld, asserting that Yangians can be constructed as limit forms of the quantum loop algebras, to the super case. We establish a connection between quantum loop superalgebra and super Yangian of the general linear Lie superalgebra $\mathfrak{gl}_{M|N}$ in RTT type presentation. In particular, we derive the Poincar\'e-Birkhoff-Witt(PBW) theorem for the quantum loop superalgebra $\mathrm{U}_q\big(\mathfrak{Lgl}_{M|N}\big)$. Additionally, we investigate the application of the same argument to twisted super Yangian of the ortho-symplectic Lie superalgebra. For this purpose, we introduce the twisted quantum loop superalgebra as a one-sided coideal of $\mathrm{U}_q\big(\mathfrak{Lgl}_{M|2n}\big)$ with respect to the comultiplication.
\end{abstract}
\keywords  {\ super Yangian; twisted super Yangian; twisted quantum loop superalgebra; PBW theorem.}

\section{Introduction}
Quantum groups, first appearing in the theory of quantum integrable system, were formalized independently by Drinfeld and Jimbo as certain special Hopf algebras around 1985 \cite{D1,J}. These structures exhibited profound connections with various areas of mathematics and physics, including representation theory, algebraic combinatorics, and low dimensional
topology etc. Briefly, a quantum group can be understood as a $q$-deformation of the universal enveloping algebra of semi-simple Lie algebra or Kac-Moody algebra.
Among the significant families of quantized enveloping algebras, two hold particular importance: Yangians and quantum loop (affine) algebras.  Drinfeld in \cite{D2} proposed the Drinfeld type presentation of Yangians and quantum loop algebras, which were widely used in studying their highest weight representations and classifying of finite dimensional irreducible representations \cite{CP,HK}. Additionally, Yangians and quantum loop algebras possess the third presentations known as the RTT type presentations. This presentation, stemming from Faddeev, Reshetikhin and Takhtajan \cite{FRT}, provides a natural Hopf algebra structure that facilitates the investigation of tensor representations.

Sklyanin \cite{Sk} first introduced the twisted version of Yangians to study on boundary conditions for quantum integrable systems, named twisted Yangians by Olshanski \cite{Ol}. These algebras were constructed from classical symmetric pairs $\mathrm{AI}:\ (\mathfrak{gl}_N,\mathfrak{o}_N)$ and $\mathrm{AII}:\ (\mathfrak{gl}_N,\mathfrak{sp}_N)$. There are two methods to realize a twisted Yangian \cite{MNO}: One way is to define it as a subalgebra of the Yangian associated with $\mathfrak{gl}_N$ using the usual transposition, while the other is to construct an abstract algebra with generators subject to the quaternary relation and the symmetry relation. In \cite{GR}, the authors systematically studied twisted Yangians associated with the symmetric pairs of types $\mathrm{BDI}$, $\mathrm{CI}$, $\mathrm{CII}$, $\mathrm{DIII}$, which are coideal subalgebras of the Yangians for orthogonal or symplectic Lie algebras. More recently, Lu, Wang and Zhang \cite{LWZ} provide a Gauss decomposition approach to establish a Drinfeld type presentation for twisted Yangian of type $\mathrm{AI}$.

In 1986, Drinfeld stated that Yangians can be understood as limit forms of the quantum loop algebras in some sense, its proofs were provided by \cite{GM} and \cite{GT}. Conner and Guay further established a similar connection between the twisted quantum loop algebras and twisted Yangians of type $\mathrm{AI}$ and $\mathrm{AII}$ in \cite{CG} in terms of RTT type presentations.

Quantum supergroups, also referred to as quantum superalgebras, have emerged as supersymmetric generalizations of quantum groups \cite{BGZ,Y1}. The study of quantum affine superalgebras has been pursued by numerous researchers in related fields. Yamane \cite{Y2} systematically investigated the Serre-type presentations of type $\mathbf{A}$-$\mathbf{G}$ (quantum) affine Lie superalgebras and introduced the Drinfeld-Jimbo type presentations of quantum affine superalgebras. Yamane also constructed the Drinfeld type presentation of type $\mathbf{A}$ quantum affine Lie superalgebra. In a separate work, Y. -Z. Zhang in \cite{Zy} investigated the super RS construction and generalized the Ding-Frenkel \cite{DF} homomorphism to the quantum affine superalgebra associated with $\mathfrak{gl}_{M|N}$. Tsymbaliuk \cite{T21} employed the Shuffle algebra approach to provide a positive part of PBWD basis (i.e. Drinfeld type PBW basis) for type $\mathbf{A}$ quantum affine superalgebra. More recently, the authors in \cite{LYZ} constructed a minimalistic presentation quantum affine superalgebra of type $\mathbf{A}$ and demonstrated the isomorphism between Drinfeld-Jimbo type presentation and Drinfeld type presentation. These findings can be naturally generalized to quantum loop superalgebras, often perceived as quotient superalgebras of quantum affine superalgebras by the center.

In the early 1990s, Nazarov \cite{N} introduced a super-analogue $\mathrm{Y}\big(\mathfrak{gl}_{M|N}\big)$ by extending the definition of the Yangian $\mathrm{Y}\big(\mathfrak{gl}_{M}\big)$ in terms of RTT type presentation. Using the RTT type presentation and Gauss decomposition technique, Gow obtained the Drinfeld type presentation for $\mathrm{Y}\big(\mathfrak{gl}_{M|N}\big)$ with respect to distinguished Borel subalgebra \cite{G}. Likewise, Drinfeld type presentations for the super Yangians can be defined starting from non-conjugacy Dynkin diagrams, which are known to be pairwise isomorphic \cite{T}.
The finite dimensional irreducible representations of $\mathrm{Y}\big(\mathfrak{gl}_{M|N}\big)$ for the standard Dynkin diagram were classified by R. B. Zhang \cite{Zr} and for other Dynkin diagrams are investigated by the odd reflection established by \cite{Lu,M2}.
Y. -N. Peng established the parabolic presentation for super Yanigan of the general linear Lie superalgebra associated with arbitrary Borel subalgebras in \cite{P1,P2}.
Moreover, the definitions of the super Yangian and twisted super Yangian associated with orthosymplectic Lie superalgebras $\mathfrak{osp}_{M|N}$ were introduced in \cite{AACFR} and \cite{BR}, respectively. Recently, a detailed description of the finite dimensional irreducible representations of the orthosymplectic Yangian $\mathrm{Y}\big(\mathfrak{osp}_{M|N}\big)$ had been obtained by Molev in a series of papers \cite{M1,M3,M5}, and the Drinfeld type presentations were established in \cite{M4,MR}.

Inspired by the works of P. Conner, N. Guay and X. Ma \cite{CG,GM}, one naturally expects a direct connection between (twisted) super Yangians and (twisted) quantum loop superalgebras. In this paper, our primary target is to construct a filtered superalgebra with respect to a certain filtration of the quantum loop superalgebra,  which is isomorphic to the quantum super Yangian of type $\mathfrak{gl}_{M|N}$. Indeed, we use the fact that the $\mathbb{A}$-form of quantum loop superalgebra converges to the corresponding universal enveloping superalgebra in the classical limit. While this fact is widely accepted in the certified non-super version \cite{D1,Lus}, there are still not a systematic proof for super case. Hence, our initial focus in this paper is to study the $\mathbb{A}$-form. The second major goal is to extend a similar result to twisted case. To exploring twisted case, we introduce a new twisted quantum loop superalgebra that allows us to approach the twisted super Yangian described in \cite{BR}. As for orthosymplectic Yangians $\mathrm{Y}\big(\mathfrak{osp}_{M|N}\big)$, we will address them in an upcoming paper.

This paper is organized as follows. In Section 2, we begin by introducing some essential notations and reviewing the definitions of the super Yangian $\mathrm{Y}_{\hbar}\big(\mathfrak{gl}_{M|N}\big)$ and the quantum loop superalgebra $\mathrm{U}_q\big(\mathfrak{L}\mathfrak{gl}_{M|N}\big)$ exactly. We establish the PBW theorem for quantum loop superalgebra $\mathrm{U}_q\big(\mathfrak{L}\mathfrak{gl}_{M|N}\big)$ in a fixed order of the RTT type generators. Moreover, we introduce a RTT-version of $\mathbb{A}$-form, denoted as $\mathrm{U}_{\mathbb{A}}$, and establish a connection between $\mathrm{U}_{\mathbb{A}}$ and the universal enveloping superalgebra $\mathrm{U}\big(\mathfrak{L}\mathfrak{gl}_{M|N}\big)$. One of main results presented in this paper is the demonstration of the isomorphism between the super Yangian $\mathrm{Y}_{\hbar}\big(\mathfrak{gl}_{M|N}\big)$ and the graded algebra $\operatorname{gr}\mathrm{U}_{\mathbb{A}}$ subject to a suitable filtration. In Section 3, we recall the definition of the twisted super Yangian $\mathrm{Y}_{\hbar}^{tw}\big(\mathfrak{osp}_{M|2n}\big)$ under the super-transposition. Following that, we introduce the twisted quantum loop superalgebra as a sub-superalgebra of $\mathrm{U}_q\big(\mathfrak{L}\mathfrak{gl}_{M|2n}\big)$. Finally, we prove the statement that the twisted super Yangian $\mathrm{Y}_{\hbar}^{tw}\big(\mathfrak{osp}_{M|2n}\big)$ is isomorphic to a graded algebra based on the previously defined twisted quantum loop superalgebra.

\medskip

Recall the the complex set $\mathbb{C}$ and the integer set $\mathbb{Z}$. Throughout this paper, all superspaces, associative (super)algebras and Lie (super)algebras are over $\mathbb{C}$ if not specially stated. We write $\mathbb{Z}_2=\mathbb{Z}/2\mathbb{Z}=\left\{\bar{0},\bar{1}\right\}$ and the parity of an element $x$ is denoted by $|x|$. If both $\mathcal{A}$ and $\mathcal{B}$ are associative superalgebras, then $\mathcal{A}\otimes\mathcal{B}$ is understood as the associative superalgebra with graded multiplication
$$(a_1\otimes b_1)(a_2\otimes b_2)=(-1)^{|a_2||b_1|}a_1a_2\otimes b_1b_2, \text{ for homogeneous elements } a_2\in \mathcal{A}, b_1\in \mathcal{B}.$$

We use the following conventions in this paper. Let $\mathcal{I}$ be the ideal of an associative superalgebra $\mathcal{A}$. If $x\in\mathcal{A}$, $\overline{x}$ denotes the image of $a$ on the quotient $\mathcal{A}/\mathcal{I}$. The set $\mathbb{Z}_{\star}$ means that $\mathbb{Z}_{\star}\subseteq \mathbb{Z}$ and the elements contained in $\mathbb{Z}_{\star}$ satisfies the condition $\star$. The function $\delta_{\star}$ equals to 1 if the condition $\star$ is satisfied and 0 otherwise.

\section{Super Yangian and Quantum loop superalgebra}
In this section, we will review some basic definitions of the RTT type presentation of the Super Yangian $\mathrm{Y}\big(\mathfrak{gl}_{M|N}\big)$ and the quantum loop superalgebra $\mathrm{U}_q\big(\mathfrak{L}\mathfrak{gl}_{M|N}\big)$ to fix notations.

For nonnegative integers $M$ and $N$, we set $I_{M|N}:=\{1,\ldots,M+N\}$, simply denoted by $I$, on which we define the parity of $i\in I$ to be
$$|i|:=\begin{cases}\bar{0},&\text{if }\ 1\leqslant i\leqslant M,\\ \bar{1},&\text{if }\ M+1\leqslant i\leqslant M+N.\end{cases}$$
Let $V:=\mathbb{C}^{M|N}$ be the superspace with standard basis $v_i$ of parity $|i|$ for $i\in I$. Thus $\mathrm{End}(V)$ is an associative superalgebra with standard basis $E_{ij}$ of parity $|i|+|j|$ for $i,j\in I$. Under the standard supercommutator, $\mathrm{End}(V)$ is also a Lie superalgebra, which is denoted by $\mathfrak{gl}_{M|N}$.

We fix \textit{the standard Cartan sub-superalgebra} $\mathfrak{h}:=\text{Span}_{\mathbb{C}}\{h_i:=E_{ii}\ |\ i\in I\}$. Let $\{\epsilon_1,\ldots,\epsilon_{M+N}\}$ be the basis of $\mathfrak{h}^*$ dual to $\{h_1,\ldots, h_{M+N}\}$ with respect to the nondegenerate symmetric bilinear form $(\,\cdot\,|\,\cdot\,)$ given by $\left(\varepsilon_i\big|\varepsilon_j\right)=(-1)^{|i|}\delta_{ij}$. Then $P:=\mathbb{Z}\epsilon_1\oplus\ldots\oplus\mathbb{Z}\epsilon_{M+N}$ is \textit{the weight lattice}. For $i\in I_{M|N}^{\prime}=I\backslash\{M+N\}$ (simply denoted by $I^{\prime}$), we denote $\alpha_i=\varepsilon_i-\varepsilon_{i+1}$ to be the simple roots. Therefore the set $\Pi=\big\{\alpha_i\ |\ i\in I^{\prime}\big\}$ is  the root basis and the set $\Pi^{\vee}=\big\{\alpha_i^{\vee}\ |\ i\in I^{\prime}\big\}$ is the coroot basis with simple coroots $\alpha_i^{\vee}=h_i-(-1)^{|i|+|i+1|}h_{i+1}$. \textit{The special linear Lie superalgbera} $\mathfrak{sl}_{M|N}$ is a sub-superalgebra of $\mathfrak{gl}_{M|N}$ generated by $\alpha_i^{\vee}$ and $e_i=E_{i,i+1},\,f_i=E_{i+1,i}$ for $i\in I^{\prime}$. The Cartan matrix associated to standard Borel sub-superalgebra  of $\mathfrak{sl}_{M|N}$ is $A=\left(a_{ij}\right)_{i,j\in I^{\prime}}$ with $a_{ij}=\left(\alpha_i\big|\alpha_j\right)$, and the Dynkin diagram is given as follows,

\begin{center}
	\begin{tikzpicture}
		\draw[line width =1pt] node at(1.62,0.64){$\alpha_1$} (1.62,0)circle[radius=0.22] (1.84,0)--(2.8,0);
		\draw[line width =1pt] node at(3.02,0.64){$\alpha_2$} (3.02,0)circle[radius=0.22] (3.24,0)--(4.1,0);
		\draw[line width =1pt]  node at(4.43,0){$\cdots$} (4.68,0)--(5.34,0);
		\draw[line width =1pt] (5.72,0.16)--(5.4,-0.16) (5.72,-0.16)--(5.4,0.16);
        \draw[line width =1pt] node at(5.56,0.64){$\alpha_M$} (5.56,0)circle[radius=0.22] (5.78,0)--(6.74,0);
        \draw[line width =1pt] node at(6.96,0.64){$\alpha_{M+1}$} (6.96,0)circle[radius=0.22] (7.18,0)--(8.02,0);
		\draw[line width =1pt]  node at(8.35,0){$\cdots$} (8.60,0)--(9.26,0);
		\draw[line width =1pt] node at(9.50,0.64){$\alpha_{M+N-1}$} (9.50,0)circle[radius=0.22];
	\end{tikzpicture}.
\end{center}

\subsection{Super Yangian $\mathrm{Y}\big(\mathfrak{gl}_{M|N}\big)$}
We recall the definition and some basic facts of the super Yangian $\mathrm{Y}\big(\mathfrak{gl}_{M|N}\big)$, more details can be found in \cite{G,N,T}. The \textit{super Yangian} $\mathrm{Y}\big(\mathfrak{gl}_{M|N}\big)$ is an associative superalgebra generated by the generators $\big\{t_{ij}^{(m)}|i,j\in I,m\in\mathbb{Z}_{>0} \big\}$ with the parity $|t_{ij}^{(m)}|=|i|+|j|$, convention $t_{ij}^{(0)}=\delta_{ij}$, and satisfing the following relation
\begin{gather}\label{Y1}
\left[t_{ij}^{(m)},\,t_{kl}^{(n)}\right]=(-1)^{|i||j|+|i||k|+|j||k|}\sum_{r=0}^{\min(m,n)-1}\left(t_{kj}^{(r)}t_{il}^{(m+n-1-r)}
-t_{kj}^{(m+n-1-r)}t_{il}^{(r)}\right).
\end{gather}
Equivalently,
\begin{gather}\label{Y2}
\left[t_{ij}^{(m+1)},\,t_{kl}^{(n)}\right]-\left[t_{ij}^{(m)},\,t_{kl}^{(n+1)}\right]
=(-1)^{|i||j|+|i||k|+|j||k|}\left(t_{kj}^{(m)}t_{il}^{(n)}-t_{kj}^{(n)}t_{il}^{(m)}\right),
\end{gather}
for all $i,j,k,l\in I$ and $n,m\geqslant 0$.

RTT type presentation of the super Yangian $\mathrm{Y}\big(\mathfrak{gl}_{M|N}\big)$ can be given as follows. The rational $R$-matrix $\mathsf{R}(u)=1\otimes 1-u^{-1}P\in \textrm{End}\left(V^{\otimes 2}\right)$, where $P=\sum\limits_{i,j\in I} (-1)^{|j|}E_{ij}\otimes E_{ji}$, satisfies the quantum Yang-Baxter equation
\begin{gather*}
\mathsf{R}^{12}(u)\mathsf{R}^{13}(u+v)\mathsf{R}^{23}(v)=\mathsf{R}^{23}(v)\mathsf{R}^{13}(u+v)\mathsf{R}^{12}(u),
\end{gather*}
where $\mathsf{R}^{ij}: V\otimes V\otimes V\longrightarrow V\otimes V\otimes V$ is a map acting on the $(i,j)$-th position by $\mathsf{R}$.
Let $$t_{ij}(u)=\sum\limits_{m\geq 0}t_{ij}^{(m)}u^{-m}\in\mathrm{Y}\big(\mathfrak{gl}_{M|N}\big)\left[\left[u^{-1}\right]\right],$$
and
$$\mathsf{T}(u)=\sum_{i,j\in I}(-1)^{|i||j|+|j|}E_{ij}\otimes t_{ij}(u).$$
Then the defining relation \eqref{Y1} can be rewritten as
\begin{gather}\label{Y3}
\mathsf{R}^{12}(u-v)\mathsf{T}^{1}(u)\mathsf{T}^{2}(v)=\mathsf{T}^{2}(v)\mathsf{T}^{1}(u)\mathsf{R}^{12}(u-v),
\end{gather}
where $\mathsf{T}^{1}(u)=\sum\limits_{i,j\in I }(-1)^{|i||j|+|j|}E_{ij}\otimes 1\otimes t_{ij}(u)$ and $\mathsf{T}^{2}(u)=\sum\limits_{i,j\in I }(-1)^{|i||j|+|j|}1\otimes E_{ij}\otimes t_{ij}(u)$ are in  $\text{End}(V^{\otimes 2} )\otimes \mathrm{Y}\big(\mathfrak{gl}_{M|N}\big)\left[\left[u^{-1}\right]\right].$

The super Yangian $\mathrm{Y}\big(\mathfrak{gl}_{M|N}\big)$ is a Hopf superalgebra endowed with the following comultiplication, and antipode given by
\begin{gather*}
\Delta_1\big(\mathsf{T}(u)\big)=\mathsf{T}(u)\otimes \mathsf{T}(u)\quad\textrm{and}\quad S_1\big(\mathsf{T}(u)\big)=\mathsf{T}(u)^{-1},\ \textrm{respectively}.
\end{gather*}

Let $\hbar\in\mathbb{C}\setminus\{0\}$ be the deformation parameter. We take a deformation of generators in $\mathrm{Y}\big(\mathfrak{gl}_{M|N}\big)$ by
$\hat{t}_{i,j}^{(m)}=\hbar^{1-m}t_{i,j}^{(m)}$ for all $i,j\in I, m\geqslant 1$ and change relation \eqref{Y2} to the following form
\begin{gather}\label{Y4}
\left[\hat{t}_{ij}^{(m+1)},\,\hat{t}_{kl}^{(n)}\right]-\left[\hat{t}_{ij}^{(m)},\,\hat{t}_{kl}^{(n+1)}\right]
=(-1)^{|i||j|+|i||k|+|j||k|}\hbar\left(\hat{t}_{kj}^{(m)}\hat{t}_{il}^{(n)}-\hat{t}_{kj}^{(n)}\hat{t}_{il}^{(m)}\right).
\end{gather}
The new superalgebra is called the \textit{quantum super Yangian}, denoted  by $\mathrm{Y}_{\hbar}\big(\mathfrak{gl}_{M|N}\big)$. The superalgebras $\mathrm{Y}_{\hbar}\big(\mathfrak{gl}_{M|N}\big)$ for all $\hbar\in\mathbb{C}\setminus\{0\}$ are pairwise isomorphic. It is a deformation of $\mathrm{U}\big(\mathfrak{gl}_{M|N}[x]\big)$ in terms of the limit $\hbar\mapsto 0$.

Take the lexicographical order "$\preceq$" on $I\times I\times \mathbb{Z}$ such that $(i_1,j_1;r_1)\preceq(i_2,j_2;r_2)$ for $i_1,i_2,j_1,j_2\in I$, $r_1,r_2\in\mathbb{Z}$ if and only if one of the following conditions hold:
\begin{gather*}
(1)\ i_1<i_2,\quad (2)\ i_1=i_2,\ j_1<j_2,\quad (3)\ i_1=i_2,\ j_1=j_2,\ r_1\leqslant r_2.
\end{gather*}
We call $(i_1,j_1;r_1)\prec(i_2,j_2;r_2)$ if $(i_1,j_1;r_1)\preceq(i_2,j_2;r_2)$ and $(i_1,j_1;r_1)\neq (i_2,j_2;r_2)$.
We say that the monomial
\begin{gather}\label{X:order:1}
x_{i_1,j_1}^{(r_1)}x_{i_2,j_2}^{(r_2)}\cdots x_{i_t,j_t}^{(r_t)}
\end{gather}
is an ordered monomial if $x_{i_1,j_1}^{(r_1)}\preceq\cdots\preceq x_{i_t,j_t}^{(r_t)}$.
Define an order on the set of the generators of $\mathrm{Y}_{\hbar}\big(\mathfrak{gl}_{M|N}\big)$ by
\begin{gather*}
\hat{t}_{i_1,j_1}^{(r_1)}\preceq \hat{t}_{i_2,j_2}^{(r_2)}\quad \textrm{if and only if}\quad (i_1,j_1;r_1)\preceq(i_2,j_2;r_2).
\end{gather*}
The next proposition is a natural generalization of the PBW theorem for super Yangian $\mathrm{Y}\big(\mathfrak{gl}_{M|N}\big)$ proved in \cite{G}.
\begin{proposition}\label{Y:ordered}
The set of all ordered monomials
\begin{gather}\label{Y:ordered:eq1}
\hbar^s\hat{t}_{i_1,j_1}^{(r_1)}\hat{t}_{i_2,j_2}^{(r_2)}\cdots \hat{t}_{i_t,j_t}^{(r_t)}
\end{gather}
with the restrictions
\begin{gather}\label{restriction1}
s,t\geqslant 0,\quad \textrm{and}~~\hat{t}_{i_a,j_a}^{(r_a)}\prec\hat{t}_{i_{a+1},j_{a+1}}^{(r_{a+1})}(1\leqslant a\leqslant t-1)~~\textrm{if}~~|i_a|+|j_a|=|i_{a+1}|+|j_{a+1}|=\bar{1}
\end{gather}
forms a basis of quantum super Yangian $\mathrm{Y}_{\hbar}(\mathfrak{gl}_{M|N})$.
\end{proposition}

\subsection{Quantum loop superalgebra $\mathrm{U}_q\big(\mathfrak{L}\mathfrak{gl}_{M|N}\big)$}
Let $\mathfrak{L}\mathfrak{gl}_{M|N}=\mathfrak{gl}_{M|N}\otimes\mathbb{C}[x,x^{-1}]$ be the loop superalgebra associated to the general linear Lie superalgebra.
It admits a Lie superalgebra structure with the standard supercommutator.
\begin{definition}\label{Uni}
The \textit{universal enveloping superalgebra} $\mathrm{U}\big(\mathfrak{L}\mathfrak{gl}_{M|N}\big)$ is an associative superalgebra generated by $E_{i,j}^{(r)}$ for $i,j\in I$, $r\in\mathbb{Z}$ with the parity $|E_{ij}^{(r)}|=|i|+|j|$, satisfying the following non-trivial relations
\begin{gather}\label{Uni:eq}
\left[E_{ij}^{(r)},\,E_{kl}^{(s)}\right]=\delta_{jk}E_{il}^{(r+s)}-\varepsilon_{ij;kl}\delta_{il}E_{kj}^{(r+s)},
\end{gather}
where we use the notation $\varepsilon_{ij;kl}=(-1)^{(|i|+|j|)(|k|+|l|)}$ for $i,j,k,l\in I$.

\end{definition}
It is widely recognized that there exists a canonical embedding map $\mathfrak{Lgl}_{M|N}\rightarrow \mathrm{U}\big(\mathfrak{Lgl}_{M|N}\big)$ such that $E_{ij}\otimes x^r\mapsto E_{ij}^{(r)}$. The element $E_{ij}\otimes f(x)$ for $f(x)\in \mathbb{C}[x,x^{-1}]$ under this map is often denoted by $E_{ij}f(x)$. In particular, $E_{ij}x^r=E_{ij}^{(r)}$.
Define an order on the set of the generators of $\mathrm{U}\big(\mathfrak{Lgl}_{M|N}\big)$ by
\begin{align*}
&E_{i_1,j_1}^{(r_1)}\preceq E_{i_2,j_2}^{(r_2)}\quad \textrm{if and only if}\quad (i_1,j_1;r_1)\preceq(i_2,j_2;r_2), \\
&E_{i_1,j_1}^{(-r_1)}\preceq E_{i_2,j_2}^{(-r_2)}\quad \textrm{if and only if }\quad (i_1,j_1;r_1)\preceq(i_2,j_2;r_2), \\
&E_{i,j}^{(-r)}\preceq E_{i,j}^{(r)}\quad \textrm{for any possible}\quad (i,j;r),
\end{align*}
where $r_1,r_2\geqslant 0$ and the order "$\preceq$" is the same as Section 2.1.
The next proposition provides a PBW type basis of $\mathrm{U}\big(\mathfrak{Lgl}_{M|N}\big)$, which directly follows from the discussion in \cite[Section 2.3]{Sc}.
\begin{proposition}\label{U:ordered}
The set of all ordered monomials \eqref{X:order:1} in the generators $E_{ij}^{(r)}$ with the restrictions
\begin{gather}\label{restriction2}
t\geqslant 0,\quad \textrm{and}~~E_{i_a,j_a}^{(r_a)}\prec E_{i_{a+1},j_{a+1}}^{(r_{a+1})}(1\leqslant a\leqslant t-1)~~\textrm{if}~~|i_a|+|j_a|=|i_{a+1}|+|j_{a+1}|=\bar{1}
\end{gather}
forms a basis of $\ \mathrm{U}\big(\mathfrak{Lgl}_{M|N}\big)$.
\end{proposition}

\vspace{1em}
\subsubsection{Definition of the quantum loop superalgebra $\mathrm{U}_q\big(\mathfrak{L}\mathfrak{gl}_{M|N}\big)$}
Let q be an indeterminate. To study the RTT type presentation of quantum affine superalgebra $\mathrm{U}_q\big(\mathfrak{L}\mathfrak{gl}_{M|N}\big)$, we first review the Perk-Schultz solution $\Re_q(u,v)\in \text{End}\left(V^{\otimes 2}\right)[u,v]$ of the quantum Yang-Baxter equation (c.f. \cite{PS,Zh})
\begin{gather*}
\Re_q(u,v)=\sum_{i,j\in I}\left(uq_i^{\delta_{ij}}-vq_i^{-\delta_{ij}}\right)E_{ii}\otimes E_{jj}+u\sum_{i>j}\left(q_j-q_j^{-1}\right)E_{ij}\otimes E_{ji}+v\sum_{i<j}\left(q_j-q_j^{-1}\right)E_{ij}\otimes E_{ji},
\end{gather*}
which satisfies the following quantum Yang-Baxter equation
\begin{gather*}
\Re_q^{12}(u,v)\Re_q^{13}(u,w)\Re_q^{23}(v,w)=\Re_q^{23}(v,w)\Re_q^{13}(u,w)\Re_q^{12}(u,v).
\end{gather*}
Here $q_i:=q^{(-1)^{|i|}}$ and $\Re_q^{ij}(1\leq i<j\leq 3)$ is obtained from $\Re_q$ similarly as in Section 2.1. Then the quantum loop superalgebra $\mathrm{U}_q\big(\mathfrak{L}\mathfrak{gl}_{M|N}\big)$ is defined via the Faddeev-Reshetikhin-Takhtajan's presentation as follows.

\begin{definition}
The \textit{quantum loop superalgebra} $\mathrm{U}_q\big(\mathfrak{L}\mathfrak{gl}_{M|N}\big)$ in terms of its RTT type presentation is an associative superalgebra generated by the set of generators $\big\{T_{ij}^{(r)},\widetilde{T}_{ij}^{(r)}\big|\ i,j\in I,r\in\mathbb{Z}_{\geqslant 0}\big\}$ with the parities $\big|T_{ij}^{(r)}\big|=\big|\widetilde{T}_{ij}^{(r)}\big|=|i|+|j|$ for all $r\in\mathbb{Z}_{\geqslant 0}$, satisfying the following relations
\begin{align}\label{RTT1}
&T_{ji}^{(0)}=\widetilde{T}_{ij}^{(0)}=0,~\textrm{if\ }~1\leqslant i<j\leqslant M+N, \\ \label{RTT2}
&T_{ii}^{(0)}\widetilde{T}_{ii}^{(0)}=\widetilde{T}_{ii}^{(0)}T_{ii}^{(0)}=1,~\textrm{if\ }~i\in I, \\ \label{RTT3}
&\Re_q^{12}(u,v)T^{1}(u)T^{2}(v)=T^{2}(v)T^{1}(u)\Re_q^{12}(u,v), \\ \label{RTT4}
&\Re_q^{12}(u,v)\widetilde{T}^{1}(u)\widetilde{T}^{2}(v)=\widetilde{T}^{2}(v)\widetilde{T}^{1}(u)\Re_q^{12}(u,v), \\ \label{RTT5}
&\Re_q^{12}(u,v)\widetilde{T}^{1}(u)T^{2}(v)=T^{2}(v)\widetilde{T}^{1}(u)\Re_q^{12}(u,v),
\end{align}
where the matrices $T(u)$ and $\widetilde{T}(u)$ have the forms
\begin{gather*}
T(u)=\sum_{i,j\in I}E_{ij}\otimes T_{ij}(u)\ \ \text{ with }\ T_{ij}(u)=\sum_{r\geqslant 0}T_{ij}^{(r)}u^{r},
\end{gather*}
and
\begin{gather*}
\widetilde{T}(u)=\sum_{i,j\in I}E_{ij}\otimes \widetilde{T}_{ij}(u)\ \ \text{ with }\ \widetilde{T}_{ij}(u)=\sum_{r\geqslant 0}\widetilde{T}_{ij}^{(r)}u^{-r}.
\end{gather*}
\end{definition}

It is a Hopf superalgebra endowed with the comultiplication
\begin{gather*}
\Delta_2:\ T_{ij}(u)\mapsto\sum_{k\in I}\varepsilon_{ik;kj}T_{ik}(u)\otimes T_{kj}(u),\quad \widetilde{T}_{ij}(u)\mapsto\sum_{k\in I}\varepsilon_{ik;kj}\widetilde{T}_{ik}(u)\otimes \widetilde{T}_{kj}(u),
\end{gather*}
and antipode $S_2:\ T(u)\mapsto T(u)^{-1},\ \widetilde{T}(u)\mapsto \widetilde{T}(u)^{-1}$, respectively.

We often use a more explicit expression in terms of the generators for relations \eqref{RTT3}$-$\eqref{RTT5}. More precisely, we can rewrite the defining relation \eqref{RTT5} in terms of the generators $T_{ij}^{(r)},\widetilde{T}_{ij}^{(r)}$ as
\begin{equation}\label{RTT5n5}
\begin{split}
&q_i^{-\delta_{ik}}\widetilde{T}_{ij}^{(r)}T_{kl}^{(s-1)}-q_i^{\delta_{ik}}\widetilde{T}_{ij}^{(r+1)}T_{kl}^{(s)}
-\varepsilon_{ij;kl}\left(q_j^{-\delta_{jl}}T_{kl}^{(s-1)}\widetilde{T}_{ij}^{(r)}-q_j^{\delta_{jl}}T_{kl}^{(s)}\widetilde{T}_{ij}^{(r+1)}\right) \\
=&\,\varepsilon_{ik;kl}\left(q_k-q_k^{-1}\right)\left(\delta_{k<i}\widetilde{T}_{kj}^{(r+1)}T_{il}^{(s)}
-\delta_{j<l}T_{kj}^{(s)}\widetilde{T}_{il}^{(r+1)}+\delta_{i<k}\widetilde{T}_{kj}^{(r)}T_{il}^{(s-1)}
-\delta_{l<j}T_{kj}^{(s-1)}\widetilde{T}_{il}^{(r)}\right).
\end{split}
\end{equation}
The defining relations in terms of the generators $T_{ij}^{(r)}$ is obtained from \eqref{RTT5n5} by replacing $\widetilde{T}_{ij}^{(r)}$ by $T_{ij}^{(r)}$,
\begin{equation}\label{RTT6}
\begin{split}
&q_i^{-\delta_{ik}}T_{ij}^{(r+1)}T_{kl}^{(s)}-q_i^{\delta_{ik}}T_{ij}^{(r)}T_{kl}^{(s+1)}
-\varepsilon_{ij;kl}\left(q_j^{-\delta_{jl}}T_{kl}^{(s)}T_{ij}^{(r+1)}-q_j^{\delta_{jl}}T_{kl}^{(s+1)}T_{ij}^{(r)}\right) \\
=&\,\varepsilon_{ik;kl}\left(q_k-q_k^{-1}\right)\left(\delta_{k<i}T_{kj}^{(r)}T_{il}^{(s+1)}-\delta_{j<l}T_{kj}^{(s+1)}T_{il}^{(r)}+\delta_{i<k}T_{kj}^{(r+1)}T_{il}^{(s)}
-\delta_{l<j}T_{kj}^{(s)}T_{il}^{(r+1)}\right),
\end{split}
\end{equation}
and the defining relations in terms of the generators $\widetilde{T}_{ij}^{(r)}$ is obtained from \eqref{RTT5n5} by replacing $T_{ij}^{(r)}$ by $\widetilde{T}_{ij}^{(r)}$,
\begin{equation}\label{RTT4n5}
\begin{split}
&q_i^{-\delta_{ik}}\widetilde{T}_{ij}^{(r)}\widetilde{T}_{kl}^{(s+1)}-q_i^{\delta_{ik}}\widetilde{T}_{ij}^{(r+1)}\widetilde{T}_{kl}^{(s)}
-\varepsilon_{ij;kl}\left(q_j^{-\delta_{jl}}\widetilde{T}_{kl}^{(s+1)}\widetilde{T}_{ij}^{(r)}
-q_j^{\delta_{jl}}\widetilde{T}_{kl}^{(s)}\widetilde{T}_{ij}^{(r+1)}\right) \\
=&\,\varepsilon_{ik;kl}\left(q_k-q_k^{-1}\right)\left(\delta_{k<i}\widetilde{T}_{kj}^{(r+1)}\widetilde{T}_{il}^{(s)}
-\delta_{j<l}\widetilde{T}_{kj}^{(s)}\widetilde{T}_{il}^{(r+1)}+\delta_{i<k}\widetilde{T}_{kj}^{(r)}\widetilde{T}_{il}^{(s+1)}
-\delta_{l<j}\widetilde{T}_{kj}^{(s+1)}\widetilde{T}_{il}^{(r)}\right).
\end{split}
\end{equation}

\begin{remark}
For $N=0$, we set $O$ to be the matrix $\sum\limits_{i=1}^ME_{i,M+1-i}$, then the $R$-matrix $\Re_q(u,v)$ satisfies the following equation
\begin{gather*}
(O\otimes O)\Re_q(u,v)(O\otimes O)=uvR\left(-u^{-1},-v^{-1}\right),
\end{gather*}
where $R(u,v)$ is the non-super trigonometric $R$-matrix (c.f. \cite[Formula 3.4]{MRS}). Then the algebra $X$ generated by the coefficients of the matrix elements of the matrices $OT\left(-u^{-1}\right)O$ and $O\widetilde{T}\left(-u^{-1}\right)O$ with relations \eqref{RTT1}$-$\eqref{RTT5} is isomorphic to $\mathrm{U}_q\big(\mathfrak{L}\mathfrak{gl}_M\big)$. Similarly, it coincides with the quantum loop algebra $\mathrm{U}_q\big(\mathfrak{L}\mathfrak{gl}_N\big)$ for $M=0$.
\end{remark}

\vspace{1em}
\subsubsection{Poincar\'e-Birkhoff-Witt theorem}
The Poincar\'e-Birkhoff-Witt theorem is an indispensable tool in studying the structures and representation theory of quantum (super-)groups. Numerous papers, including those referenced from \cite{BCFK, HZ}, heavily depend on this theorem for their analyses. However, a complete PBW basis for quantum affine (resp. loop) superalgebra of type $\mathbf{A}$ has not yet to be established.

In this subsection, we shall give a completed proof of the PBW theorem for the quantum loop superalgebra $\mathrm{U}_q\big(\mathfrak{L}\mathfrak{gl}_{M|N}\big)$ in RTT type presentation by using Gow-Molev's approach \cite[Sect. 2.1 \& 2.3]{GM} for quantum affine algebra $\mathrm{U}_q\big(\widehat{\mathfrak{gl}}_N\big)$ (see also \cite{BK} for Yangian $\mathrm{Y}\big(\mathfrak{gl}_N\big)$ and \cite{G} for super Yangian $\mathrm{Y}\big(\mathfrak{gl}_{M|N}\big)$ ). We start by reviewing some properties for the quantized enveloping superalgebra associated with Lie superalgebra $\mathfrak{gl}_{M|N}$. In this subsection, we set $q\in \mathbb{C}\backslash\{0\}$.

Set $\Re_q=\Re_q(1,0)$ and $\widetilde{\Re}_q=P\Re_q^{-1}P$, we obtain the relationships $\Re_q(u,v)=u\Re_q-v\widetilde{\Re}_q$ and $\Re_q-\widetilde{\Re}_q=\left(q-q^{-1}\right)P$. The matrix $\Re_q$ satisfies the quantum Yang-Baxter equation
\begin{gather*}
\Re_q^{12}\Re_q^{13}\Re_q^{23}=\Re_q^{23}\Re_q^{13}\Re_q^{12}.
\end{gather*}
The definition of quantized enveloping superalgebra $\mathrm{U}_q\big(\mathfrak{gl}_{M|N}\big)$ in terms of the $R$-matrix $\Re_q$ is given as follows.

\begin{definition}
The \textit{quantized enveloping superalgebra} $\mathrm{U}_q\big(\mathfrak{gl}_{M|N}\big)$ in its RTT type presentation is an associative superalgebra generated by the set of generators $\{T_{ij},\widetilde{T}_{ji}|1\leqslant i\leqslant j\leqslant M+N\}$ with the parity $|T_{ij}|=|\widetilde{T}_{ji}|=|i|+|j|$, satisfying the RTT-relations
\begin{align}\label{RTTu1}
&\hspace{3cm}T_{ii}\widetilde{T}_{ii}=\widetilde{T}_{ii}T_{ii}=1 \quad \text{for }i\in I, \\ \label{RTTu2}
&\Re_q^{12}T^{1}T^{2}=T^{2}T^{1}\Re_q^{12},\quad \Re_q^{12}\widetilde{T}^{1}\widetilde{T}^{2}=\widetilde{T}^{2}\widetilde{T}^{1}\Re_q^{12},\quad
\Re_q^{12}\widetilde{T}^{1}T^{2}=T^{2}\widetilde{T}^{1}\Re_q^{12},
\end{align}
where $\Re_q$ is the R-matrix given in subsection 2.2.1, and the upper(resp. lower) triangular matrix $T$\big(resp. $\widetilde{T}$\big) is defined by
\begin{gather*}
T=\sum_{i\leqslant j}E_{ij}\otimes T_{ij}\ \text{\big(resp. $\widetilde{T}=\sum_{i\leqslant j}E_{ij}\otimes T_{ij}$\big)}.
\end{gather*}

\end{definition}
It is a Hopf superalgebra endowed with the comultiplication $\Delta_0$ and antipode $S_0$ such that
\begin{gather*}
\Delta_0\left(T\right)=\sum_{k\in I}T\otimes T,\ \Delta_0\left(\widetilde{T}\right)=\sum_{k\in I}\widetilde{T}\otimes \widetilde{T};
\quad S_0\left(T\right)=T^{-1},\ S_0\left(\widetilde{T}\right)=\widetilde{T}^{-1}.
\end{gather*}
As known, the quantized enveloping superalgebra can be presented differently through Drinfeld-Jimbo type presentation, which has been subject to more extensive study. In this presentation (refer to \cite{HZ,Zr93} and the references therein), quantized enveloping superalgebra, denoted by $\mathcal{U}_q\big(\mathfrak{gl}_{M|N}\big)$, is generated by the set of generators
\begin{gather*}
\big\{e_{ij},k_a^{\pm 1}:=e_{aa}^{\pm 1}\big|i,j,a\in I,i\neq j\big\}\quad \text{with the parities }|e_{ij}|=|i|+|j|,\ |k_a|=0,
\end{gather*}
satisfying the relations
\begin{align}\label{DJu1}
&k_ak_a^{-1}=k_a^{-1}k_a=1,\quad k_ak_b=k_bk_a, \\ \label{DJu2}
&k_ae_{i,i+1}k_a^{-1}=q_a^{\delta_{ia}-\delta_{i+1,a}}e_{i,i+1},\quad k_ae_{i+1,i}k_a^{-1}=q_a^{\delta_{i+1,a}-\delta_{i,a}}e_{i+1,i}, \\ \label{DJu3}
&\left[e_{i,i+1},\,e_{j+1,j}\right]=\delta_{ij}\frac{k_ik_{i+1}^{-1}-k_i^{-1}k_{i+1}}{q_i-q_i^{-1}}, \\ \label{DJu4}
&\left(e_{M,M+1}\right)^2=\left(e_{M+1,M}\right)^2=0, \\ \label{DJu5}
&e_{i,i+1}e_{j,j+1}=e_{j,j+1}e_{i,i+1},\quad e_{i+1,i}e_{j+1,j}=e_{j+1,j}e_{i+1,i},\quad \text{if }j\neq i,i\pm 1, \\ \label{DJu6}
&\left(e_{i,i+1}\right)^2e_{i\pm 1,i\pm 1+1}-\left(q+q^{-1}\right)e_{i,i+1}e_{i\pm 1,i\pm 1+1}e_{i,i+1}+e_{i\pm 1,i\pm 1+1}\left(e_{i,i+1}\right)^2=0,\quad \text{if }i\neq M, \\ \label{DJu7}
&\left(e_{i+1,i}\right)^2e_{i\pm 1+1,i\pm 1}-\left(q+q^{-1}\right)e_{i+1,i}e_{i\pm 1+1,i\pm 1}e_{i+1,i}+e_{i\pm 1+1,i\pm 1}\left(e_{i+1,i}\right)^2=0,\quad \text{if }i\neq M, \\ \label{DJu8}
&\left[e_{M-1,M+2},\,e_{M,M+1}\right]=\left[e_{M+2,M-1},\,e_{M+1,M}\right]=0.
\end{align}
The elements $e_{ij}$ for $i<j$ can be defined recursively by
\begin{gather}\label{DJu9}
e_{ij}=e_{ik}e_{kj}-q_k^{-1}e_{kj}e_{ik},\quad e_{ji}=e_{jk}e_{ki}-q_ke_{ki}e_{jk}\quad\text{for }i<k<j.
\end{gather}
The following proposition has been adopted in numerous papers, as exhibited by \cite[Proposition 3.3]{Zh}.
\begin{proposition}\label{facts}
  \begin{enumerate}
  \item[(1)] There exists an isomorphism of Hopf superalgebras $\Psi:\mathrm{U}_q\big(\mathfrak{gl}_{M|N}\big)\rightarrow \mathcal{U}_q\big(\mathfrak{gl}_{M|N}\big)$ such that
      \begin{gather}\label{isou1}
      T_{ii}=\widetilde{T}_{ii}^{-1}\mapsto k_i,\ T_{ij}\mapsto (1-q_i^{-2})k_ie_{ij},\quad \widetilde{T}_{ji}\mapsto (1-q_i^2)e_{ji}k_i^{-1}.
      \end{gather}
  \item[(2)] The map $T_{ij}\mapsto T_{ij}^{(0)},\ \widetilde{T}_{ji}\mapsto \widetilde{T}_{ji}^{(0)}$
  can be uniquely extended to a homomorphism of Hopf superalgebras ${\bf em}:\mathrm{U}_q\big(\mathfrak{gl}_{M|N}\big)\rightarrow \mathrm{U}_q\big(\mathfrak{L}\mathfrak{gl}_{M|N}\big)$.
  \item[(3)] The map
   $T_{ij}(u)\mapsto T_{ij}-\widetilde{T}_{ij}u,\ \widetilde{T}_{ij}(u)\mapsto \widetilde{T}_{ij}-T_{ij}u^{-1}$
   can be uniquely extended to a homomorphism of Hopf superalgebras ${\bf ev}:\mathrm{U}_q\big(\mathfrak{L}\mathfrak{gl}_{M|N}\big)\rightarrow \mathrm{U}_q\big(\mathfrak{gl}_{M|N}\big)$.
  \end{enumerate}
\end{proposition}
The map defined in the last statement of Proposition \ref{facts} is often called by \textit{evaluation homomorphism}. Observing that the composite ${\bf em}\circ {\bf ev}$ is the identity map on $\mathrm{U}_q\big(\mathfrak{gl}_{M|N}\big)$. The PBW theorem of $\mathrm{U}_q\big(\mathfrak{gl}_{M|N}\big)$ depends on the first statement in Proposition \ref{facts}. For a more detailed proof, it is necessary to introduce some new superalgebras.

Consider an indeterminate $z$ and set $z_i=z^{(-1)^{|i|}}$. We introduce the associative superalgebra $\mathrm{U}_z\big(\mathfrak{gl}_{M|N}\big)$ over $\mathbb{C}(z)$ generated by the set $\big\{T_{ij},\widetilde{T}_{ji}\big|1\leqslant i\leqslant j\leqslant M+N\big\}$ with the parities $|T_{ij}|=|\widetilde{T}_{ji}|=|i|+|j|$. The defining relations of this superalgebra are obtained from \eqref{RTTu1} and \eqref{RTTu2} by replacing $q$ with $z$. Similarly, we introduce another associative $\mathbb{C}(z)$-superalgebra $\mathcal{U}_z\big(\mathfrak{gl}_{M|N}\big)$ generated by the set $\big\{e_{ij},k_a^{\pm 1}\big|i,j,a\in I,i\neq j\big\}$ with the parities $|e_{ij}|=|i|+|j|,\ |k_i|=0$ concerning the relations obtained from \eqref{DJu1}--\eqref{DJu9} by replacing $q$ with $z$.
The isomorphism between these two superalgebras is also provided as \eqref{isou1}. Here we adopt the same notations for the generators without confusion.

Now, define the superalgebra $\mathrm{U}_z^{\circ}\big(\mathfrak{gl}_{M|N}\big)$ over $\mathbb{C}[z,z^{-1}]$ as $\mathrm{U}_z\big(\mathfrak{gl}_{M|N}\big)$ by replacing $\mathbb{C}(z)$ with $\mathbb{C}[z,z^{-1}]$. Then we have the following isomorphism
\begin{gather}\label{isou2}
\mathrm{U}_z^{\circ}\big(\mathfrak{gl}_{M|N}\big)\otimes_{\mathbb{C}[z,z^{-1}]}\mathbb{C}\xlongrightarrow{\simeq} \mathrm{U}_q\big(\mathfrak{gl}_{M|N}\big),
\end{gather}
where $\mathbb{C}$ is viewed as a $\mathbb{C}[z,z^{-1}]$-module through the evaluation $z=q$.

Define an order on the set of generators for $\mathrm{U}_z^{\circ}\big(\mathfrak{gl}_{M|N}\big)$ (also for $\mathrm{U}_q\big(\mathfrak{gl}_{M|N}\big)$):
\begin{align*}
&T_{i_1,j_1}\preceq T_{i_2,j_2}\quad\text{if and only if}\quad (i_1,j_1;0)\preceq (i_2,j_2;0), \\
&\widetilde{T}_{i_1,j_1}\prec \widetilde{T}_{i_2,j_2}\quad\text{if and only if}\quad (i_1,j_1;0)\preceq (i_2,j_2;0), \\
&T_{i_1,j_1}\preceq \widetilde{T}_{i_2,j_2}\quad\text{if and only if}\quad (i_1,j_1;0)\preceq (i_2,j_2;0).
\end{align*}

\begin{proposition}
Let $\mathfrak{B}$ be the set of all ordered monomials
\begin{align*}
&\left(T_{1,1}\right)^{m_{1,1}}\left(\widetilde{T}_{1,1}\right)^{\widetilde{m}_{1,1}}
\left(T_{1,2}\right)^{m_{1,2}}\cdots \left(T_{1,M+N}\right)^{m_{1,M+N}}\cdots \\
\cdots&\left(\widetilde{T}_{M+N,1}\right)^{\widetilde{m}_{M+N,1}}\cdots \left(T_{M+N,M+N}\right)^{m_{M+N,M+N}}\left(\widetilde{T}_{M+N,M+N}\right)^{\widetilde{m}_{M+N,M+N}}
\end{align*}
with the exponents
\begin{align*}
&m_{ij},\widetilde{m}_{ij}\in\mathbb{Z}_{\geqslant 0}\quad \text{for }|i_a|+|j_a|=\bar{0},\ i_a\neq j_a, \\
&m_{ij},\widetilde{m}_{ij}\in\{0,1\}\quad \text{for }|i_a|+|j_a|=\bar{1}, \\
&m_{ii}\widetilde{m}_{ii}=0\quad \text{for } i\in I.
\end{align*}
Then $\mathfrak{B}$ forms a basis of $\ \mathrm{U}_z^{\circ}\big(\mathfrak{gl}_{M|N}\big)$ over $\mathbb{C}[z,z^{-1}]$.
\end{proposition}

\begin{proof}
We rewrite the last relation of \eqref{RTTu2} for $\mathrm{U}_z^{\circ}\big(\mathfrak{gl}_{M|N}\big)$ as more concretely relations in terms of generators $T_{ij},\widetilde{T}_{ij}$ given by
\begin{gather}\label{zueq1}
z_i^{\delta_{ik}}\widetilde{T}_{ij}T_{kl}-\varepsilon_{ij;kl}z_j^{\delta_{jl}}T_{kl}\widetilde{T}_{ij}=\varepsilon_{ik;kl}\left(z_k-z_k^{-1}\right)\times
\left(\delta_{j<l}T_{kj}\widetilde{T}_{il}-\delta_{k<i}\widetilde{T}_{kj}T_{il}\right).
\end{gather}
If $i>k$, we have $\widetilde{T}_{ij}T_{kl}\in\operatorname{span}_{\mathbb{C}[z,z^{-1}]}\mathfrak{B}$. Take $i=k$ and for $l>j$, then
\begin{gather*}
T_{il}\widetilde{T}_{ij}=-\varepsilon_{ji;il}z_j^{-\delta_{jl}}\bigg\{\widetilde{T}_{ij}T_{il}
+\left(z_i-z_i^{-1}\right)\delta_{j<l}T_{ij}\widetilde{T}_{il}\bigg\}\in\operatorname{span}_{\mathbb{C}[z,z^{-1}]}\mathfrak{B}.
\end{gather*}
The relation between the generators $T_{ij},T_{kl}$ (resp. $\widetilde{T}_{ji},\widetilde{T}_{lk}$) can be deduced from \eqref{zueq1} by replacing $\widetilde{T}$ (resp. $T$) by $T$ (resp. $\widetilde{T}$). This allows us to apply the same arguments for these relations.
Therefore, the set $\mathfrak{B}$ spans $\mathrm{U}_z^{\circ}\big(\mathfrak{gl}_{M|N}\big)$.

We are left to prove that $\mathfrak{B}$ is linearly independent. Suppose not, there exists a linear combination $\mathcal{M}$ with nonzero coefficients in $\mathbb{C}[z,z^{-1}]$ equal to zero. Using the isomorphism \eqref{isou1}, $\mathcal{M}$ is a linear combination of $\Psi\left(\mathfrak{B}\right)$ with nonzero coefficients in $\mathbb{C}[z,z^{-1}]$ equal to zero, which is contradicted to the PBW theorem of $\mathcal{U}_z\big(\mathfrak{gl}_{M|N}\big)$(see more details in \cite[Theorem 4.1]{HZ}). Here we use our order on the generators of $\mathcal{U}_z\big(\mathfrak{gl}_{M|N}\big)$(via the isomorphism \eqref{isou1}, we identify with the corresponding generators), which also constructs a PBW type basis of $\mathcal{U}_z\big(\mathfrak{gl}_{M|N}\big)$.

\end{proof}
Then we immediately have
\begin{corollary}
Let $\rho$ be the isomorphism given in \eqref{isou2}. Then $\rho\big(\mathfrak{B}\otimes 1\big)$ forms a basis of $\ \mathrm{U}_q\big(\mathfrak{gl}_{M|N}\big)$.
\end{corollary}

Note that the superalgebra $\mathrm{U}_1\big(\mathfrak{gl}_{M|N}\big)$ is supercommutative when $q=1$. Then we can identify it with the supersymmetric algebra $\mathfrak{S}$ generated by the set $\big\{g_{ij},\widetilde{g}_{ji}\big|1\leqslant i\leqslant j\leqslant M+N\big\}$ respect to the relations $g_{ii}\widetilde{g}_{ii}=1$ for all $i\in I$ and the parities $|g_{ij}|=|\widetilde{g}_{ji}|=|i|+|j|$. Then we have
\begin{gather}\label{isou3}
\mathrm{U}_z^{\circ}\big(\mathfrak{gl}_{M|N}\big)\otimes_{\mathbb{C}[z,z^{-1}]}\mathbb{C}\xlongrightarrow{\simeq} \mathfrak{S},
\end{gather}
where $\mathbb{C}$ is viewed as a $\mathbb{C}[z,z^{-1}]$-module via taking $z=1$.

\vspace{1em}
Let us return to the quantum loop superalgebra. We also define the superalgebra $\mathrm{U}_z^{\circ}\big(\mathfrak{L}\mathfrak{gl}_{M|N}\big)$ over $\mathbb{C}[z,z^{-1}]$ generated by the set $\big\{T_{ij}^{(r)},\widetilde{T}_{ij}^{(r)}\big|i,j\in I,\ r\in\mathbb{Z}_{\geqslant 0}\big\}$ with the parities $|T_{ij}^{(r)}|=|\widetilde{T}_{ij}^{(r)}|=|i|+|j|$ and with the relations obtained from \eqref{RTT1}--\eqref{RTT5} by replacing $q\rightarrow z$. Here $z$ is an indeterminate as before. Then we also have
\begin{gather}\label{isou4}
\mathrm{U}_z^{\circ}\big(\mathfrak{L}\mathfrak{gl}_{M|N}\big)\otimes_{\mathbb{C}[z,z^{-1}]}\mathbb{C}\xlongrightarrow{\simeq} \mathrm{U}_q(\mathfrak{Lgl}_{M|N}),
\end{gather}
where $\mathbb{C}$ is viewed as a $\mathbb{C}[z,z^{-1}]$-module via taking $z=q$.

It is clear that $\mathrm{U}_z^{\circ}\big(\mathfrak{L}\mathfrak{gl}_{M|N}\big)$ also has a Hopf superalgebra structure over $\mathbb{C}[z,z^{-1}]$ with the same comultiplication and antipode as $\mathrm{U}_q\big(\mathfrak{L}\mathfrak{gl}_{M|N}\big)$, denoted by $\Delta_z$ and $S_z$ respectively. Define $\Delta_z^{(p)}$($p>0$) by the $p-1$ times composite
\begin{gather*}
\left(\Delta_z\otimes 1^{\otimes p-2}\right)\circ\cdots\circ\left(\Delta_z\otimes 1\right)\circ\Delta_z,
\end{gather*}
which maps $\mathrm{U}_z^{\circ}\big(\mathfrak{L}\mathfrak{gl}_{M|N}\big)$ onto $\mathrm{U}_z^{\circ}\big(\mathfrak{L}\mathfrak{gl}_{M|N}\big)^{\otimes p}$
such that
\begin{align*}
&T_{ij}^{(r)}\mapsto \sum_{a_1+\cdots+a_p=r}\sum_{k_1,\ldots,k_{p-1}\in I}(-1)^dT_{ik_1}^{(a_1)}\otimes T_{k_1k_2}^{(a_2)}\otimes\cdots\otimes T_{k_{p-1}j}^{(a_p)}, \\
&\widetilde{T}_{ij}^{(r)}\mapsto \sum_{a_1+\cdots+a_p=r}\sum_{k_1,\ldots,k_{p-1}\in I}(-1)^d\widetilde{T}_{ik_1}^{(a_1)}\otimes\widetilde{T}_{k_1k_2}^{(a_2)}\otimes\cdots\otimes \widetilde{T}_{k_{p-1}j}^{(a_p)},
\end{align*}
where $d=|i||j|+|i||k_1|+|k_1|+|j||k_{p-1}|+\sum_{t=2}^{p-1}|k_s|\big(|k_s|+|k_{s-1}|\big)$ for $p>1$ and $d=0$ for $p=1$. Moreover, the map $\textbf{ev}_z:\mathrm{U}_z^{\circ}\big(\mathfrak{L}\mathfrak{gl}_{M|N}\big)\rightarrow \mathrm{U}_z^{\circ}\big(\mathfrak{gl}_{M|N}\big)$ can be reduced from the evaluation $\textbf{ev}$ defined in Proposition \ref{facts}.

Introduce a homomorphism of superalgebras over $\mathbb{C}[z,z^{-1}]$ for every $p\in\mathbb{Z}_{>0}$
\begin{gather*}
\kappa_p:\mathrm{U}_z^{\circ}\big(\mathfrak{L}\mathfrak{gl}_{M|N}\big)
\rightarrow\mathrm{U}_z^{\circ}\big(\mathfrak{gl}_{M|N}\big)^{\otimes p}
\end{gather*}
given by
\begin{gather*}
\kappa_p=\big(\textbf{ev}_z\otimes \textbf{ev}_z\otimes \cdots \otimes \textbf{ev}_z\big)\circ \Delta_z^{(p)}.
\end{gather*}
More precisely, the map $\kappa_p$ actions on the generators of $\mathrm{U}_z^{\circ}\big(\mathfrak{L}\mathfrak{gl}_{M|N}\big)$ via
\begin{align*}
&T_{ij}^{(r)}\mapsto \sum_{k_1,\ldots,k_{p-1}}\sum_{1\leqslant s_1<\cdots<s_r\leqslant p}(-1)^{d+r}T_{ik_1}^{[1]}T_{k_1k_2}^{[2]}\cdots \widetilde{T}_{k_{s_1-1}k_{s_1}}^{[s_1]}\cdots \widetilde{T}_{k_{s_r-1}k_{s_r}}^{[s_r]}\cdots T_{k_{p-1}j}^{[p]}, \\
&\widetilde{T}_{ij}^{(r)}\mapsto \sum_{k_1,\ldots,k_{p-1}}\sum_{1\leqslant s_1<\cdots<s_r\leqslant p}(-1)^{d+r}\widetilde{T}_{ik_1}^{[1]}\widetilde{T}_{k_1k_2}^{[2]}\cdots T_{k_{s_1-1}k_{s_1}}^{[s_1]}\cdots T_{k_{s_r-1}k_{s_r}}^{[s_r]}\cdots \widetilde{T}_{k_{p-1}j}^{[p]}, \\
\end{align*}
where the index subset $\left\{k_0=i,k_1,\ldots,k_{p-1},k_p=j\right\}\subset I$ in the first (resp. second) relation satisfies the condition
\begin{gather*}
k_{s_{a-1}}\geqslant k_{s_a},\ k_{b-1}\leqslant k_b\ \text{(resp. $k_{s_{a-1}}\leqslant k_{s_a},\ k_{b-1}\geqslant k_b$\ )}
\end{gather*}
for $1\leqslant a\leqslant r$ and $b\in \{1,\ldots,p\}\setminus\{s_1,\ldots,s_r\}$ and the notations $X_{ij}^{[s]}$ for $X\in\left\{T,\widetilde{T}\right\}$ means
$1^{\otimes (s-1)}\otimes X_{ij}\otimes 1^{\otimes (p-s)}$. Notice that the images of $T_{ij}^{(r)}$ and $\widetilde{T}_{ij}^{(r)}$ under $\kappa_p$ are zero for the case of $p<r$.

Define an order on the set generators both for $\mathrm{U}_z^{\circ}\big(\mathfrak{L}\mathfrak{gl}_{M|N}\big)$ and for $\mathrm{U}_q\big(\mathfrak{L}\mathfrak{gl}_{M|N}\big)$:
\begin{align*}
&T_{i_1,j_1}^{(r_1)}\preceq T_{i_2,j_2}^{(r_2)}\quad\text{if and only if}\quad (i_1,j_1;r_1)\preceq (i_2,j_2;r_2), \\
&\widetilde{T}_{i_1,j_1}^{(r_1)}\preceq \widetilde{T}_{i_2,j_2}^{(r_2)}\quad\text{if and only if}\quad (i_1,j_1;r_1)\preceq (i_2,j_2;r_2), \\
&T_{i_1,j_1}^{(r_1)}\preceq \widetilde{T}_{i_2,j_2}^{(r_2)}\quad\text{if and only if}\quad (i_1,j_1;r_1)\preceq (i_2,j_2;r_2).
\end{align*}

\begin{proposition}\label{base}
Let $\widehat{\mathfrak{B}}$ be the set of all ordered monomials (set $K=M+N$)
\begin{equation}\label{base:Uq}
\begin{split}
&\left(T_{1,1}^{(0)}\right)^{m_{1,1;0}}\left(\widetilde{T}_{1,1}^{(0)}\right)^{\widetilde{m}_{1,1;0}}
\left(T_{1,1}^{(1)}\right)^{m_{1,1;1}}\left(\widetilde{T}_{1,1}^{(1)}\right)^{\widetilde{m}_{1,1;1}}\cdots \\
\cdots&\left(T_{1,2}^{(0)}\right)^{m_{1,2;0}}
\left(T_{1,2}^{(1)}\right)^{m_{1,2;1}}\left(\widetilde{T}_{1,2}^{(1)}\right)^{\widetilde{m}_{1,2;1}}\cdots
\left(T_{1,K}^{(0)}\right)^{m_{1,K;0}}
\left(T_{1,K}^{(1)}\right)^{m_{1,K;1}}\left(\widetilde{T}_{1,K}^{(1)}\right)^{\widetilde{m}_{1,K;1}}\cdots  \\
\cdots&\left(\widetilde{T}_{2,1}^{(0)}\right)^{\widetilde{m}_{2,1;0}}
\left(T_{2,1}^{(1)}\right)^{m_{2,1;1}}\left(\widetilde{T}_{2,1}^{(1)}\right)^{\widetilde{m}_{2,1;1}}\cdots
\left(\widetilde{T}_{2,K}^{(0)}\right)^{\widetilde{m}_{2,K;0}}
\left(T_{2,K}^{(1)}\right)^{m_{2,K;1}}\left(\widetilde{T}_{2,K}^{(1)}\right)^{\widetilde{m}_{2,K;1}}\cdots \\
\cdots&\left(T_{K,K}^{(0)}\right)^{m_{K,K;0}}\left(\widetilde{T}_{K,K}^{(0)}\right)^{\widetilde{m}_{K,K;0}}
\left(T_{K,K}^{(1)}\right)^{m_{K,K;1}}\left(\widetilde{T}_{K,K}^{(1)}\right)^{\widetilde{m}_{K,K;1}}\cdots
\end{split}
\end{equation}
with the exponents
\begin{align}\label{base:1}
&m_{i,j;r},\ \widetilde{m}_{i,j;r}\in\mathbb{Z}_{\geqslant 0}\quad \text{for }|i|+|j|=\bar{0}, \\ \label{base:2}
&m_{i,j;r},\ \widetilde{m}_{i,j;r}\in\{0,1\}\quad \text{for }|i|+|j|=\bar{1}, \\ \label{base:3}
&m_{i,i;0}\widetilde{m}_{i,i;0}=0\quad\text{for }i\in I.
\end{align}
Then $\widehat{\mathfrak{B}}$ forms a basis of $\ \mathrm{U}_z^{\circ}\big(\mathfrak{L}\mathfrak{gl}_{M|N}\big)$ over $\mathbb{C}[z,z^{-1}]$.
\end{proposition}

\begin{proof}
First, we claim that the monomial set $\widehat{\mathfrak{B}}$ spans $\mathrm{U}_z^{\circ}\big(\mathfrak{L}\mathfrak{gl}_{M|N}\big)$. It is enough to show that any monomials with the form $\eta_{i_a,j_a}^{(r_a)}\eta_{i_b,j_b}^{(r_b)}$ for $(i_b,j_b;r_b)\prec (i_a,j_a;r_a)$ can be regarded as a nontrivial combination of $\widehat{\mathfrak{B}}$. The relation $\eqref{RTT5}$ is equivalent to
\begin{equation}\label{zueq2}
\begin{split}
&\left(z_i^{-\delta{ik}}v-z_i^{\delta{ik}}u\right)\widetilde{T}_{ij}(u)T_{kl}(v)
-\varepsilon_{ij;kl}\left(z_j^{-\delta{jl}}v-z_j^{\delta{jl}}u\right)T_{kl}(u)\widetilde{T}_{ij}(v) \\
=&\varepsilon_{ik;kl}\left(z_k-z_k^{-1}\right)\left\{\left(\delta_{k<i}u+\delta_{i<k}v\right)\widetilde{T}_{kj}(u)T_{il}(v)
-\left(\delta_{j<l}u+\delta_{l<j}v\right)T_{kj}(v)\widetilde{T}_{il}(u)\right\}.
\end{split}
\end{equation}
Here we replace $q$ with the indeterminate $z$. It is clear that the coefficients of $\widetilde{T}_{ij}(u)T_{kl}(v)$ for $i>k$ are contained in $\operatorname{span}_{\mathbb{C}[z,z^{-1}]}\widehat{\mathfrak{B}}$. Take $i=k$ and $l>j$, then we have
\begin{gather*}
T_{il}(u)\widetilde{T}_{ij}(v)
=\varepsilon_{ji;il}\frac{z_i^{-1}v-z_iu}{v-u}\widetilde{T}_{ij}(u)T_{il}(v)
+\varepsilon_{ji;il}\frac{\left(z_i-z_i^{-1}\right)u}{v-u}T_{ij}(v)\widetilde{T}_{il}(u),
\end{gather*}
which implies that the coefficients of $\widetilde{T}_{il}(u)T_{ij}(v)$ for $l>j$ are also contained in $\operatorname{span}_{\mathbb{C}[z,z^{-1}]}\widehat{\mathfrak{B}}$. Furthermore, we have
\begin{gather*}
\widetilde{T}_{ij}^{(r)}T_{ij}^{(s)}=(-1)^{|i|+|j|}\frac{z_j^{-1}v-z_ju}{z_i^{-1}v-z_iu}T_{ij}^{(s)}\widetilde{T}_{ij}^{(r)}.
\end{gather*}
By applying the Taylor series expansion
\begin{gather*}
\frac{1}{z_i^{-1}v-z_iu}=\sum_{t=0}^{\infty}z_i^{2t+1}u^tv^{-(t+1)},
\end{gather*}
one gets for $r\geqslant s$
\begin{gather*}
\widetilde{T}_{ij}^{(r)}T_{ij}^{(s)}=(-1)^{|i|+|j|}\sum_{t=0}^{\infty}z_i^{2t+1}
\left(z_j^{-1}T_{ij}^{(s-t)}\widetilde{T}_{ij}^{(r-t)}-z_jT_{ij}^{(s-t-1)}\widetilde{T}_{ij}^{(r-t-1)}\right)\in \operatorname{span}_{\mathbb{C}[z,z^{-1}]}\widehat{\mathfrak{B}}.
\end{gather*}
The relations between the generators $T_{ij}^{(r)},T_{kl}^{(s)}$ (resp. $\widetilde{T}_{ij}^{(r)},\widetilde{T}_{kl}^{(s)}$) can be deduced from \eqref{zueq2} by replacing $\widetilde{T}$ (resp. $T$) by $T$ (resp. $\widetilde{T}$). This allows us to apply the same arguments for these relations.

Next, we need to show that $\widehat{\mathfrak{B}}$ is linearly independent. Suppose not, there exists a linear combination $\mathcal{N}$ with nonzero coefficients in $\mathbb{C}[z,z^{-1}]$ equal to zero. The number of the pairwise distinct generators $T_{ij}^{(r)},\widetilde{T}_{ij}^{(r)}$ occurred in $\mathcal{N}$ must be finite. Choose a suitable positive integer $t$ not less than any possible $r$ for $T_{ij}^{(r)},\widetilde{T}_{ij}^{(r)}$ in $\mathcal{N}$. Put $p=2t+1$. Every term in $\mathcal{N}$ is not vanish under the homomorphism $\kappa_p$. Then we get a nontrivial $\mathbb{C}[z,z^{-1}]$-linear combination $\kappa_p\left(\mathcal{N}\right)$ in the superalgebra $\mathrm{U}_z^{\circ}\big(\mathfrak{gl}_{M|N}\big)^{\otimes p}$ equal to zero.

Recall the supersymmetric algebra $\mathfrak{S}$ with generators $g_{ij},\widetilde{g}_{ij}$ introduced as before. We regard $\mathfrak{S}^{\otimes p}$ as the algebra with the supersymmetric generators $g_{ij}^{[s]},\widetilde{g}_{ij}^{[s]}$, where we use the same notations $X_{ij}^{[s]}$ for $X=g$ or $\widetilde{g}$. Denote the elements $f_{ij}^{(r)}$\bigg(resp. $\widetilde{f}_{ij}^{(r)}$\bigg) by the image of $\kappa_p\left(T_{ij}^{(r)}\right)\otimes 1$\bigg(resp. $\kappa_p\left(\widetilde{T}_{ij}^{(r)}\right)\otimes 1$\bigg) under the isomorphism \eqref{isou3}. Then we have a nontrivial $\mathbb{C}$-linear combination in $f_{ij}^{(r)},\widetilde{f}_{ij}^{(r)}$ equal to zero.

Denote this $\mathbb{C}$-linear combination by $\overline{\mathcal{N}}$. We claim that the coefficients of $\overline{\mathcal{N}}$ are all zero. To show it, we need to prove that the same ordered monomials as $\widehat{\mathfrak{B}}$ in $f_{ij}^{(s)},\widetilde{f}_{ij}^{(s)}$ are linearly independent. Here we identify $\widetilde{f}_{ii}^{(0)}$ with the inverse of $f_{ii}^{(0)}$ for all $i\in I$. It is sufficient to show that the morphism $\Theta$ mapping $\left(g_{ij}^{[s]},\,\widetilde{g}_{ij}^{[s]}\right)$ to $\left(f_{ij}^{(r)},\,\widetilde{f}_{ij}^{(r)}\right)$ for $1\leqslant p$ and $0\leqslant r\leqslant t$ is injective.

Let $J_{x_1\cdots x_k}$ be the following $k\times k$ matrix
\begin{equation*}
\begin{pmatrix}
1 & 1 &\cdots & 1 \\
\sum_{i\neq 1} x_i & \sum_{i\neq 2} x_i &\cdots & \sum_{i\neq k} x_i \\
\cdots & \cdots &\cdots \\
\sum_{i_1,\ldots,i_{k-1}\neq 1} x_{i_1}x_{i_2}\cdots x_{i_{k-1}} & \sum_{i_1,\ldots,i_{k-1}\neq 2} x_{i_1}x_{i_2}\cdots x_{i_{k-1}}
 &\cdots & \sum_{i_1,\ldots,i_{k-1}\neq k} x_{i_1}x_{i_2}\cdots x_{i_{k-1}}
\end{pmatrix}
\end{equation*}
over $\mathbb{C}$. Consider the differential $\operatorname{d}\Theta$ which maps $\left(\operatorname{d}g_{ij}^{[s]},\,\operatorname{d}\widetilde{g}_{ij}^{[s]}\right)$ to $\left(\operatorname{d}f_{ij}^{(r)},\,\operatorname{d}\widetilde{f}_{ij}^{(r)}\right)$. We are left to show that the matrix of $\operatorname{d}\Theta$ is nondegenerate at some point in $\mathfrak{S}^{\otimes p}$. Pick nonzero pairwise distinct complex number $c_1,\cdots,c_p$ and set
\begin{gather*}
\left(g_{ij}^{[s]},\,\widetilde{g}_{ij}^{[s]}\right)=\left((-1)^{|i||j|+|j|}\delta_{ij}c_s,\,(-1)^{|i||j|+|j|}\delta_{ij}c_s^{-1}\right)
\end{gather*}
for all $1\leqslant s\leqslant p$. Then we have
\begin{equation*}
\begin{pmatrix}
\operatorname{d}f_{ij}^{(0)} \\
\vdots \\
\operatorname{d}f_{ij}^{(t)} \\
\operatorname{d}\widetilde{f}_{ij}^{(t)} \\
\vdots \\
\operatorname{d}\widetilde{f}_{ij}^{(1)}
\end{pmatrix}
=(-1)^{m(m+1)}\left(c_1\cdots c_p\right)^{p-1}
\cdot J_{c_1^{-2}\cdots c_p^{-2}}\cdot
\begin{pmatrix}
\operatorname{d}g_{ij}^{[1]} \\
\vdots \\
\operatorname{d}g_{ij}^{[p]}
\end{pmatrix}
\end{equation*}
for $i<j$,
\begin{equation*}
\begin{pmatrix}
\operatorname{d}\widetilde{f}_{ij}^{(0)} \\
\vdots \\
\operatorname{d}\widetilde{f}_{ij}^{(t)} \\
\operatorname{d}f_{ij}^{(t)} \\
\vdots \\
\operatorname{d}f_{ij}^{(1)}
\end{pmatrix}
=(-1)^{m(m+1)}\left(c_1\cdots c_p\right)^{-(p-1)}
\cdot J_{c_1^2\cdots c_p^2}
\cdot
\begin{pmatrix}
\operatorname{d}\widetilde{g}_{ij}^{[1]} \\
\vdots \\
\operatorname{d}\widetilde{g}_{ij}^{[p]}
\end{pmatrix}
\end{equation*}
for $i>j$, and
\begin{equation*}
\begin{pmatrix}
\operatorname{d}f_{ii}^{(0)} \\
\vdots \\
\operatorname{d}f_{ij}^{(t)} \\
\operatorname{d}\widetilde{f}_{ij}^{(t)} \\
\vdots \\
\operatorname{d}\widetilde{f}_{ii}^{(1)}
\end{pmatrix}
=(-1)^{m(m+1)}\left\{\left(c_1c_2\cdots c_p\right)^{p-1}-
\left(c_1c_2\cdots c_p\right)^{p-3}\right\}\cdot J_{c_1^{-2}\cdots c_p^{-2}}\cdot
\begin{pmatrix}
\operatorname{d}g_{ii}^{[1]} \\
\vdots \\
\operatorname{d}g_{ii}^{[p]}
\end{pmatrix}.
\end{equation*}
Every coefficient matrix of the three cases is nondegenerate since the determinant of the matrix $J_{c_1\cdots c_p}$ is nonzero. Therefore, the superderivations $\operatorname{d}f_{ii}^{(r)},\operatorname{d}\widetilde{f}_{ii}^{(r)}$ are linearly independent, which implies the injectivity of $\Theta$ and then proves the claim.

The statements above indicate that the coefficients in $\mathcal{N}$ are all zero, which completes the proof.

\end{proof}

The next corollary gives a PBW type basis of quantum loop superalgebra $\mathrm{U}_q\big(\mathfrak{L}\mathfrak{gl}_{M|N}\big)$.
\begin{corollary}\label{PBWaf}
Let $\hat{\rho}$ be the isomorphism given in \eqref{isou4}. Then $\hat{\rho}\big(\widehat{\mathfrak{B}}\otimes 1\big)$ forms a basis of $\ \mathrm{U}_q\big(\mathfrak{L}\mathfrak{gl}_{M|N}\big)$.
\end{corollary}

\vspace{1em}
\subsubsection{$\mathbb{A}$-form and the classical limit}
Let $\mathbb{A}$ be the localization of $\mathbb{C}[q]$ at the ideal $(q-1)$. To be more exact,
\begin{gather*}
\mathbb{A}=\Bigg\{\,\frac{f(q)}{g(q)}\,\Bigg|\,f(q),g(q)\in \mathbb{C}[q],\ g(1)\neq 0\,\Bigg\}.
\end{gather*}
Let $\mathrm{U}_{\mathbb{A}}$ be the $\mathbb{A}$-sub-superalgebra with the generators $\gamma_{ij}^{(r)},\widetilde{\gamma}_{ij}^{(r)}$ given by
\begin{equation*}
\gamma_{ij}^{(r)}=\begin{cases}
\displaystyle\frac{T_{ii}^{(0)}-1}{q-1}, &\textrm{if}~~i=j,\ r=0, \\
& \\
\displaystyle\frac{T_{ij}^{(r)}}{q-q^{-1}}, &\textrm{otherwise},
\end{cases}\quad
\widetilde{\gamma}_{ij}^{(r)}=\begin{cases}
\displaystyle\frac{\widetilde{T}_{ii}^{(0)}-1}{q-1}, &\textrm{if}~~i=j,\ r=0, \\
& \\
\displaystyle\frac{\widetilde{T}_{ij}^{(r)}}{q-q^{-1}}, &\textrm{otherwise}.
\end{cases}
\end{equation*}
Observe that all elements $T_{ij}^{(r)},\widetilde{T}_{ij}^{(r)}\in\mathrm{U}_{\mathbb{A}}$. The diagonal entries in $\mathrm{U}_{\mathbb{A}}$ satisfy
\begin{gather}\label{modulo:1}
\gamma_{ii}^{(0)}+\widetilde{\gamma}_{ii}^{(0)}=-(q-1)\gamma_{ii}^{(0)}\widetilde{\gamma}_{ii}^{(0)}\in (q-1)\mathrm{U}_{\mathbb{A}},
\end{gather}
and
\begin{gather}\label{modulo:2}
T_{ii}^{(0)}-1=(q-1)\gamma_{ii}^{(0)},\quad \widetilde{T}_{ii}^{(0)}-1=(q-1)\widetilde{\gamma}_{ii}^{(0)}\in (q-1)\mathrm{U}_{\mathbb{A}}.
\end{gather}

\begin{proposition}
The sub-superalgebra $\mathrm{U}_{\mathbb{A}}$ is an $\mathbb{A}$-form of $\ \mathrm{U}_q\big(\mathfrak{L}\mathfrak{gl}_{M|N}\big)$.
\end{proposition}
\begin{proof}

It suffices to prove that the map
\begin{gather}\label{iso:ua}
\mathrm{U}_{\mathbb{A}}\otimes_{\mathbb{A}} \mathbb{C}(q)\xlongrightarrow{\simeq} \mathrm{U}_q\big(\mathfrak{Lgl}_{M|N}\big)
\end{gather}
is an isomorphism over $\mathbb{C}(q)$, where $\mathbb{C}(q)$ is considered as an $\mathbb{A}$-module. By Corollary \ref{PBWaf}, the set $\widehat{\mathfrak{B}}$ forms a $\mathbb{C}$-basis of $\mathrm{U}_q\big(\mathfrak{Lgl}_{M|N}\big)$.

Let $\widehat{\mathfrak{B}}_1$ be the set obtained from $\widehat{\mathfrak{B}}$ by replacing $T_{ij}^{(r)},\widetilde{T}_{ij}^{(r)}$ with $\gamma_{ij}^{(r)},\widetilde{\gamma}_{ij}^{(r)}$. We claim that $\widehat{\mathfrak{B}}_1$ also forms a basis of $\mathrm{U}_q\big(\mathfrak{Lgl}_{M|N}\big)$ over $\mathbb{C}(q)$. Given a monomial
\begin{gather}\label{monomial:p}
x_{i_1,j_1}^{(r_1)}x_{i_2,j_2}^{(r_2)}\cdots x_{i_p,j_p}^{(r_p)},
\end{gather}
we define the positive integer $p$ as the length of this monomial. Introduce the filtration on $\mathrm{U}_q\big(\mathfrak{Lgl}_{M|N}\big)$ with the degree of a monomial $x\in \mathrm{U}_q\big(\mathfrak{Lgl}_{M|N}\big)$ given by its length. For $m\in\mathbb{Z}_{\geqslant 0}$, let $\mathrm{U}_{[m]}$ be the ideal of $\mathrm{U}_q\big(\mathfrak{Lgl}_{M|N}\big)$ spanned by the monomials \eqref{base:Uq} with degree $\leqslant m$.
Note that the elements
\begin{gather*}
\gamma_{ij}^{(r)}-c_{ij}^{(r)}T_{ij}^{(r)},\ \widetilde{\gamma}_{ij}^{(r)}-\widetilde{c}_{ij}^{(r)}\widetilde{T}_{ij}^{(r)}\in \mathrm{U}_{[0]}=\mathbb{C}(q)
\end{gather*}
for the constants $c_{ij}^{(r)},\ \widetilde{c}_{ij}^{(r)}\in \big\{\left(q-q^{-1}\right)^{-1},\,(q-1)^{-1}\big\}$.
This suggests that for any monomial $x_m\in \widehat{\mathfrak{B}}_2$ with degree $m\geqslant 1$, there exists an element $y_m$ contained in the PBW basis $\widehat{\mathfrak{B}}_1$ of $\mathrm{U}_q\big(\mathfrak{Lgl}_{M|N}\big)$ such that $x_m-c_my_m\in \mathrm{U}_{[m-1]}$ for $c_m\in\mathbb{C}(q)$. The images of the elements in $\widehat{\mathfrak{B}}_1$ with degree $m$ form a basis of the $\mathbb{C}(q)$-linear superspace $\mathrm{U}_{[m]}/\mathrm{U}_{[m-1]}$. Hence, the set $\widehat{\mathfrak{B}}_2$ forms a $\mathbb{C}(q)$-basis of $\mathrm{U}_q\big(\mathfrak{Lgl}_{M|N}\big)$.

Applying the same arguments as in the proof of Proposition \ref{base}, $\mathrm{U}_{\mathbb{A}}$ is spanned by the ordered monomials \eqref{base:Uq} in generators $\gamma_{ij}^{(r)},\widetilde{\gamma}_{ij}^{(r)}$ with conditions \eqref{base:1} and \eqref{base:2} as $\mathbb{A}$-linear superspace. Then, from \eqref{modulo:1}, the tensor $\mathrm{U}_{\mathbb{A}}\otimes_{\mathbb{A}}\mathbb{C}(q)$ is spanned by
the ordered monomials with the form \eqref{base:Uq} in $\gamma_{ij}^{(r)}\otimes 1,\widetilde{\gamma}_{ij}^{(r)}\otimes 1$ with conditions \eqref{base:1}--\eqref{base:3} as a $\mathbb{C}(q)$-linear superspace.
Given the $\mathbb{Z}$-gradation of $\mathrm{U}_q\big(\mathfrak{Lgl}_{M|N}\big)$ for which any components $Q_s$ is spanned by the monomials \eqref{monomial:p} of degree $s$ such that $s=\sum_{k=1}^p (r_k+1)(i_k+j_k)$. The number of these monomials with a given degree $s$ must be finite.

On the other hand, ensure the same $\mathbb{Z}$-gradation on $\mathrm{U}_{\mathbb{A}}$ with components $Q'_s=Q_s\cap \mathrm{U}_{\mathbb{A}}$. The images of $x_s\otimes 1$ for all monomials $x_s\in Q_s'$ under \eqref{iso:ua} span $Q_s$; therefore, the map \eqref{iso:ua} is bijective. Furthermore, the map \eqref{iso:ua} is $\mathbb{C}(q)$-isomorphism of superalgebras since $\mathrm{U}_{\mathbb{A}}$ is an $\mathbb{A}$-sub-superalgebra of $\mathrm{U}_q\big(\mathfrak{Lgl}_{M|N}\big)$.

\end{proof}

\begin{remark}
The proof of the proposition above refers to \cite[Chapter VIII, Section 12, Theorem 1 \& Lemma 5]{B}. More discussions for $\mathbb{A}$-form or integral form in the nonsuper case can be found in \cite{CP,HK}.
\end{remark}

The isomorphism \eqref{iso:ua} implies that the defining relations for $\mathrm{U}_{\mathbb{A}}$ can be deduced from the relations for $\mathrm{U}_q\big(\mathfrak{L}\mathfrak{gl}_{M|N}\big)$. In greater detail, the superalgebra $\mathrm{U}_{\mathbb{A}}$ admits the defining relations
\begin{align}\label{UA:1}
&\gamma_{ii}^{(0)}+\widetilde{\gamma}_{ii}^{(0)}+(q-1)\gamma_{ii}^{(0)}\widetilde{\gamma}_{ii}^{(0)}=0\ \text{for }i\in I, \\ \label{UA:2}
&\gamma_{ji}^{(0)}=\widetilde{\gamma}_{ij}^{(0)}=0\ \textrm{for}~~1\leqslant i<j\leqslant M+N,
\end{align}
and \eqref{RTT5n5}--\eqref{RTT4n5} by replacing $T_{ij}^{(r)},\widetilde{T}_{ij}^{(r)}$ with $\left(q-q^{-1}\right)\gamma_{ij}^{(r)},\left(q-q^{-1}\right)\widetilde{\gamma}_{ij}^{(r)}$ for $i,j\in I$, $r\geqslant 0$, except that, if $i=j,r=0$, replacing $T_{ii}^{(0)},\widetilde{T}_{ii}^{(0)}$ with $(q-1)\gamma_{ii}^{(0)}+1,(q-1)\widetilde{\gamma}_{ii}^{(0)}+1$.

The following theorem indicates that $\mathrm{U}_q\big(\mathfrak{Lgl}_{M|N}\big)$ can degenerate to the universal enveloping superalgebra $\mathrm{U}\big(\mathfrak{Lgl}_{M|N}\big)$ as the classical limit $q\rightarrow 1$.

\begin{theorem}\label{limit1}
There exists an isomorphism $\psi$ of superalgebras between $\mathrm{U}(\mathfrak{Lgl}_{M|N})$ to $\mathrm{U}_{\mathbb{A}}/(q-1)\mathrm{U}_{\mathbb{A}}$ such that
\begin{align*}
&E_{ij}^{(0)}\mapsto (-1)^{|i||j|}\gamma_{ij}^{(0)},\quad E_{ji}^{(0)}\mapsto -(-1)^{|i||j|}\widetilde{\gamma}_{ji}^{(0)}\quad \textrm{for}~~1\leqslant i\leqslant j\leqslant M+N, \\
&E_{ij}^{(-r)}\mapsto (-1)^{|i||j|}\gamma_{ij}^{(r)},\quad E_{ij}^{(r)}\mapsto -(-1)^{|i||j|}\widetilde{\gamma}_{ij}^{(r)}\quad\textrm{for}~~i,j\in I,\ r>0.
\end{align*}
\end{theorem}

\begin{proof}
By the defining relations of $\mathrm{U}_{\mathbb{A}}$ and modulo the ideal $(q-1)\mathrm{U}_{\mathbb{A}}$, we obtain
\begin{align}\label{UAmod:1}
&\gamma_{ii}^{(0)}\equiv -\widetilde{\gamma}_{ii}^{(0)}, \\ \label{UAmod:2}
&\left[\widetilde{\gamma}_{ii}^{(r)},\,\gamma_{ii}^{(s)}\right]\equiv \left[\gamma_{ii}^{(r)},\,\gamma_{ii}^{(s)}\right]
  \equiv \left[\widetilde{\gamma}_{ii}^{(r)},\,\widetilde{\gamma}_{ii}^{(s)}\right] \equiv 0, \\ \label{UAmod:3}
&\left[\widetilde{\gamma}_{ij}^{(r)},\,\gamma_{ij}^{(s)}\right]\equiv \left[\gamma_{ij}^{(r)},\,\gamma_{ij}^{(s)}\right]\equiv
  \left[\widetilde{\gamma}_{ij}^{(r)},\,\widetilde{\gamma}_{ij}^{(s)}\right]\equiv 0, \\ \label{UAmod:4}
&\left[\widetilde{\gamma}_{ii}^{(r)},\,\gamma_{kk}^{(s)}\right]\equiv \left[\gamma_{ii}^{(r)},\,\gamma_{kk}^{(s)}\right]\equiv
  \left[\widetilde{\gamma}_{ii}^{(r)},\,\widetilde{\gamma}_{kk}^{(s)}\right]\equiv 0, \\ \label{UAmod:5}
&\left[\widetilde{\gamma}_{ij}^{(r)},\,\gamma_{ji}^{(s)}\right]\equiv (-1)^{|i|}\left(\frac{1}{2}\right)^{\delta_{rs}}
  \left\{\delta_{s\geqslant r}\left(\gamma_{jj}^{(s-r)}-\gamma_{ii}^{(s-r)}\right) -\delta_{r\geqslant s}\left(\widetilde{\gamma}_{jj}^{(r-s)}-\widetilde{\gamma}_{ii}^{(r-s)}\right)\right\}, \\ \label{UAmod:6}
&\left[\gamma_{ij}^{(r)},\,\gamma_{ji}^{(s)}\right]\equiv (-1)^{|i|}\left(\gamma_{ii}^{(r+s)}-\gamma_{jj}^{(r+s)}\right),\quad
  \left[\widetilde{\gamma}_{ij}^{(r)},\,\widetilde{\gamma}_{ji}^{(s)}\right]\equiv
  (-1)^{|i|}\left(\widetilde{\gamma}_{jj}^{(r+s)}-\widetilde{\gamma}_{ii}^{(r+s)}\right), \\ \label{UAmod:7}
&\left[\widetilde{\gamma}_{ii}^{(r)},\,\gamma_{il}^{(s)}\right]\equiv \delta_{r\geqslant s}(-1)^{|i|}\widetilde{\gamma}_{il}^{(r-s)}-\delta_{s\geqslant r}(-1)^{|i|}\gamma_{il}^{(s-r)}, \\ \label{UAmod:8}
&\left[\gamma_{ii}^{(r)},\,\gamma_{il}^{(s)}\right]\equiv (-1)^{|i|}\gamma_{il}^{(r+s)},\quad
  \left[\widetilde{\gamma}_{ii}^{(r)},\,\widetilde{\gamma}_{il}^{(s)}\right]\equiv -(-1)^{|i|}\widetilde{\gamma}_{il}^{(r+s)}, \\ \label{UAmod:9}
&\left[\widetilde{\gamma}_{ii}^{(r)},\,\gamma_{ki}^{(s)}\right]\equiv \delta_{s\geqslant r}(-1)^{|i|}\gamma_{ki}^{(s-r)}
  -\delta_{r\geqslant s}(-1)^{|i|}\widetilde{\gamma}_{ki}^{(r-s)}, \\ \label{UAmod:10}
&\left[\gamma_{ii}^{(r)},\,\gamma_{ki}^{(s)}\right]\equiv -(-1)^{|i|}\gamma_{ki}^{(r+s)},\quad
  \left[\widetilde{\gamma}_{ii}^{(r)},\,\widetilde{\gamma}_{ki}^{(s)}\right]\equiv (-1)^{|i|}\widetilde{\gamma}_{ki}^{(r+s)}, \\ \label{UAmod:11}
&\left[\widetilde{\gamma}_{ij}^{(r)},\,\gamma_{ii}^{(s)}\right]\equiv \delta_{s\geqslant r}(-1)^{|i|}\gamma_{ij}^{(s-r)}
  -\delta_{r\geqslant s}(-1)^{|i|}\widetilde{\gamma}_{ij}^{(r-s)}, \\ \label{UAmod:12}
&\left[\gamma_{ij}^{(r)},\,\gamma_{ii}^{(s)}\right]\equiv -(-1)^{|i|}\gamma_{ij}^{(r+s)},\quad
  \left[\widetilde{\gamma}_{ij}^{(r)},\,\widetilde{\gamma}_{ii}^{(s)}\right]\equiv (-1)^{|i|}\widetilde{\gamma}_{ij}^{(r+s)},\\ \label{UAmod:13}
&\left[\widetilde{\gamma}_{ij}^{(r)},\,\gamma_{jj}^{(s)}\right]\equiv \delta_{r\geqslant s}(-1)^{|j|}\widetilde{\gamma}_{ij}^{(r-s)}
  -\delta_{s\geqslant r}(-1)^{|j|}\gamma_{ij}^{(s-r)}, \\ \label{UAmod:14}
&\left[\gamma_{ij}^{(r)},\,\gamma_{jj}^{(s)}\right]\equiv (-1)^{|j|}\gamma_{ij}^{(r+s)},\quad
  \left[\widetilde{\gamma}_{ij}^{(r)},\,\widetilde{\gamma}_{jj}^{(s)}\right]\equiv -(-1)^{|j|}\widetilde{\gamma}_{ij}^{(r+s)}, \\ \label{UAmod:15}
&\left[\widetilde{\gamma}_{ii}^{(r)},\,\gamma_{kl}^{(s)}\right]\equiv \left[\gamma_{ii}^{(r)},\,\gamma_{kl}^{(s)}\right] \equiv
  \left[\widetilde{\gamma}_{ii}^{(r)},\,\widetilde{\gamma}_{kl}^{(s)}\right]\equiv 0, \\ \label{UAmod:16}
&\left[\widetilde{\gamma}_{ij}^{(r)},\,\gamma_{kk}^{(s)}\right]\equiv \left[\gamma_{ij}^{(r)},\,\gamma_{kk}^{(s)}\right]\equiv
  \left[\widetilde{\gamma}_{ij}^{(r)},\,\widetilde{\gamma}_{kk}^{(s)}\right]\equiv 0, \\ \label{UAmod:17}
&\left[\widetilde{\gamma}_{ij}^{(r)},\,\gamma_{jl}^{(s)}\right]\equiv (-1)^{|i||j|+|i||l|+|j||l|}\left\{\delta_{r\geqslant s}
  \widetilde{\gamma}_{il}^{(r-s)}-\delta_{s\geqslant r}\gamma_{il}^{(s-r)}\right\}, \\ \label{UAmod:18}
&\left[\gamma_{ij}^{(r)},\,\gamma_{jl}^{(s)}\right]\equiv (-1)^{|i||j|+|j||l|+|i||l|}\gamma_{il}^{(r+s)},\quad
  \left[\widetilde{\gamma}_{ij}^{(r)},\,\widetilde{\gamma}_{jl}^{(s)}\right]\equiv
  -(-1)^{|i||j|+|j||l|+|i||l|}\widetilde{\gamma}_{il}^{(r+s)}, \\ \label{UAmod:19}
&\left[\widetilde{\gamma}_{ij}^{(r)},\,\gamma_{ki}^{(s)}\right]\equiv (-1)^{|i|}\left\{\delta_{s\geqslant r}\gamma_{kj}^{(s-r)}
  -\delta_{r\geqslant s}\widetilde{\gamma}_{kj}^{(r-s)}\right\}, \\ \label{UAmod:20}
&\left[\gamma_{ij}^{(r)},\,\gamma_{ki}^{(s)}\right]\equiv -(-1)^{|i|}\gamma_{kj}^{(r+s)},\quad
  \left[\widetilde{\gamma}_{ij}^{(r)},\,\widetilde{\gamma}_{ki}^{(s)}\right]\equiv (-1)^{|i|}\widetilde{\gamma}_{kj}^{(r+s)}, \\ \label{UAmod:21}
&\left[\widetilde{\gamma}_{ij}^{(r)},\,\gamma_{kl}^{(s)}\right]\equiv \left[\gamma_{ij}^{(r)},\,\gamma_{kl}^{(s)}\right]
 \equiv \left[\widetilde{\gamma}_{ij}^{(r)},\,\widetilde{\gamma}_{kl}^{(s)}\right]\equiv 0,
\end{align}
where the indexes $i,j,k,l\in I$ are pairwise distinct.

Denote for $i,j\in I$, $r\in\mathbb{Z}$ in $\mathrm{U}_{\mathbb{A}}$
\begin{equation*}
\xi_{ij}^{(r)}=\begin{cases}
(-1)^{|i||j|}\gamma_{ij}^{(r)}, &\text{if }r<0\text{ or }r=0,i\leqslant j, \\
-(-1)^{|i||j|}\widetilde{\gamma}_{ij}^{(r)}, &\text{if }r>0\text{ or }r=0,i>j.
\end{cases}
\end{equation*}
Then $\psi$ maps $E_{ij}^{(r)}$ to the image of $\xi_{i,j}^{(r)}$ in the quotient $\mathrm{U}_{\mathbb{A}}/(q-1)\mathrm{U}_{\mathbb{A}}$, that is, $\psi\left(E_{ij}^{(r)}\right)=\overline{\xi_{i,j}^{(r)}}$. Regard $\mathrm{U}_{\mathbb{A}}/(q-1)\mathrm{U}_{\mathbb{A}}$ as an (abstract) associative superalgebra, it is equipped with generators $\overline{\xi_{ij}^{(r)}}$ and follows a set of defining relations
\begin{gather*}
\left[\,\overline{\xi_{ij}^{(r)}},\,\overline{\xi_{kl}^{(s)}}\,\right]
  =\delta_{jk}\overline{\xi_{ij}^{(r)}}-(-1)^{(|i|+|j|)(|k|+|l|)}\delta_{li}\overline{\xi_{kj}^{(r)}}
\end{gather*}
for all $i,j,k,l\in I$ and $r,s\in \mathbb{Z}$,
which coincides with relation \eqref{Uni:eq}. This alignment facilitates the establishment of the isomorphism $\psi$, which is what we desire.

\end{proof}

\subsection{From quantum loop superalgebra to super Yangian}
As known, Yangians can be constructed as limit forms of the quantum loop algebras. The long-lost proofs were finally proposed by Gautam, Toledano Laredo in \cite{GT} and Guay, Ma in \cite{GM}. Similar connections for twisted Yangians and twisted quantum loop algebras had been established by Conner and Guay \cite{CG}. In this subsection, we aim to establish an isomorphism between the super Yangian $\mathrm{Y}_{\hbar}(\mathfrak{gl}_{M|N})$ and a limit form of quantum loop superalgebra $\mathrm{U}_q(\mathfrak{L}\mathfrak{gl}_{M|N})$ in terms of RTT type presentation, motivated by \cite{CG,GM}. In order to prove the isomorphism, we first introduce some notations.
Consider the following superalgebra homomorphisms sequence
\begin{gather*}
\mathrm{U}_{\mathbb{A}}\twoheadrightarrow \mathrm{U}_{\mathbb{A}}/(q-1)\mathrm{U}_{\mathbb{A}}\xlongrightarrow{\simeq} \mathrm{U}(\mathfrak{L}\mathfrak{gl}_{M|N}).
\end{gather*}
Let $\psi^{\prime}$ be the composite as above. We define the notations $\Gamma_{ij}^{(r,m)}$ and $\widetilde{\Gamma}_{ij}^{(r,m)}(r,m\geqslant 0)$ in the following way. Denote
\begin{gather*}
\Gamma_{ij}^{(0,0)}=\gamma_{ij}^{(0)},\quad \widetilde{\Gamma}_{ji}^{(0,0)}=\widetilde{\gamma}_{ji}^{(0)}\quad \textrm{if } i\leqslant j,
\end{gather*}
 and
\begin{gather*}
\Gamma_{ij}^{(0,0)}=-\widetilde{\gamma}_{ij}^{(0)},\quad \widetilde{\Gamma}_{ji}^{(0,0)}=-\gamma_{ji}^{(0)},\quad \textrm{if }i>j.
\end{gather*}
 For $r>0$, we set
\begin{gather*}
\Gamma_{ij}^{(r,0)}=\gamma_{ij}^{(r)},\quad \widetilde{\Gamma}_{ij}^{(r,0)}=\widetilde{\gamma}_{ij}^{(r)},
\end{gather*}
and recursively
\begin{gather*}
\Gamma_{ij}^{(r,m+1)}=\Gamma_{ij}^{(r+1,m)}-\Gamma_{ij}^{(r,m)},\quad \widetilde{\Gamma}_{ij}^{(r,m+1)}=\widetilde{\Gamma}_{ij}^{(r+1,m)}-\widetilde{\Gamma}_{ij}^{(r,m)}.
\end{gather*}
It is clear that $\psi^{\prime}\left(\Gamma_{ij}^{(r,m)}\right)=(-1)^m(-1)^{|i||j|} E_{ij}x^{-(r+m)}(x-1)^m$ and $\psi^{\prime}\left(\widetilde{\Gamma}_{ij}^{(r,m)}\right)=-(-1)^{|i||j|}E_{ij}x^r(x-1)^m$ for every $r\geqslant 0$.

For $-m\leqslant r<0$, define $\Gamma_{ij}^{(-m,m)}=(-1)^{m+1}\widetilde{\Gamma}_{ij}^{(0,m)}$, and recursively $\Gamma_{ij}^{(r,m+1)}=\Gamma_{ij}^{(r+1,m)}-\Gamma_{ij}^{(r,m)}$ either. It ensures that the definition of $\Gamma_{ij}^{(r,m)}$ with $r<0$ is compatible with $\psi^{\prime}\left(\Gamma_{ij}^{(r,m)}\right)$ given as before.
Introduce a filtration on $\mathrm{U}_{\mathbb{A}}$
\begin{gather*}
\textsf{F}_{[0]}\supset \textsf{F}_{[1]}\supset \textsf{F}_{[2]}\supset \ldots
\end{gather*}
by setting $\deg \Gamma_{i,j}^{(r,m)}=m$ and $\deg \left(q-q^{-1}\right)=1$ and let $\textsf{F}_{[m]}$ be
the span of all monomials of the form
\begin{gather*}
\left(q-q^{-1}\right)^{m_0}\Gamma_{i_1,j_1}^{(-r_1,m_1)}\Gamma_{i_2,j_2}^{(-r_2,m_2)}\ldots \Gamma_{i_k,j_k}^{(-r_k,m_k)},
\end{gather*}
where all $i_s,j_s\in I$, $0\leqslant r_s\leqslant \frac{1}{2}(m_s-1)$, $m_0,m_s\geqslant 0$ for $1\leqslant s\leqslant k$, $m_0+m_1+\cdots+m_k\geqslant m$.
For $r>0$, $m\geqslant 0$, we have
\begin{align*}
&\Gamma_{ij}^{(r,m)}=\Gamma_{ij}^{(0,m)}+\sum_{p=1}^{r}\binom{r}{p}\Gamma_{ij}^{(0,m+p)}\in \textsf{F}_{[m]}, \\
&\Gamma_{ij}^{(1-m,m)}=\Gamma_{ij}^{(0,m)}+\sum_{p=1}^{m-1}(-1)^p\binom{m-1}{p}\Gamma_{ij}^{(-p,m+p)} \in \textsf{F}_{[m]}, \\
&\widetilde{\Gamma}_{ij}^{(0,m)}=(-1)^{m+1}\Gamma_{ij}^{(1-m,m)}-(-1)^{m+1}\Gamma_{ij}^{(-m,m+1)} \in \textsf{F}_{[m]}, \\
&\widetilde{\Gamma}_{ij}^{(r,m)}=\widetilde{\Gamma}_{ij}^{(0,m)}+\sum_{p=1}^{r}\binom{r}{p}\widetilde{\Gamma}_{ij}^{(0,m+p)} \in \textsf{F}_{[m]}.
\end{align*}
Therefore,
\begin{gather}\label{quotient1}
(-1)^{m+1}\overline{\widetilde{\Gamma}_{ij}^{(r,m)}}=\overline{\Gamma_{ij}^{(r,m)}}= \overline{\Gamma_{ij}^{(0,m)}}\in \mathsf{F}_{[m]}/\mathsf{F}_{[m+1]}.
\end{gather}
Notice that $\textsf{F}_{[0]}=\mathrm{U}_{\mathbb{A}}$. Denote the associated $\mathbb{Z}_2$-graded algebra with respect to this filtration by $\operatorname{gr}\mathrm{U}_{\mathbb{A}}$, that is, $\operatorname{gr}\mathrm{U}_{\mathbb{A}}=\bigoplus_{m=0}^{\infty}\textsf{F}_{[m]}/\textsf{F}_{[m+1]}$.
Now we give the main result as follows.

\begin{theorem}\label{main1}
There exists an isomorphism $\varphi$ of superalgebras from the super Yangian $\mathrm{Y}_{\hbar}(\mathfrak{gl}_{M|N})$ to the filtrated algebra $\operatorname{gr}\mathrm{U}_{\mathbb{A}}$ such that $\hat{t}_{ij}^{(m+1)}\mapsto (-1)^{|i||j||+|j|} \overline{\Gamma_{ij}^{(0,m)}}$ for $i,j\in I$ and $m\geqslant 0$.
\end{theorem}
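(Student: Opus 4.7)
The plan is to prove Theorem \ref{main1} in three stages: showing that $\varphi$ is a well-defined superalgebra homomorphism, proving surjectivity, and then establishing injectivity.

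First, I would verify that the proposed images $(-1)^{|i||j|}\overline{\Gamma_{ij}^{(0,m)}}$ in $\textrm{gr}\mathrm{U}_{\mathbb{A}}$ satisfy the defining relation \eqref{Y4} of $\mathrm{Y}_{\hbar}(\mathfrak{gl}_{M|N})$. The starting point is the explicit $RTT$-relation \eqref{RTT6}. Using identities such as $q_j^{\delta_{jl}}-q_j^{-\delta_{jl}}=\delta_{jl}(q_j-q_j^{-1})$, I would rearrange its LHS into the combination $q_i^{-\delta_{ik}}[T_{ij}^{(r+1)},T_{kl}^{(s)}]_{\pm}-q_i^{\delta_{ik}}[T_{ij}^{(r)},T_{kl}^{(s+1)}]_{\pm}$ plus correction terms carrying an explicit factor of $(q-q^{-1})$. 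After dividing by $(q_i-q_i^{-1})(q_k-q_k^{-1})$ and rewriting via the renormalized generators $\gamma_{ij}^{(r)}$, and then via the iterated difference operators $\Gamma_{ij}^{(r,m)}$, one passes to the associated graded algebra where the class of $q-q^{-1}$ has degree one and plays the role of the Yangian parameter $\hbar$. Collecting the surviving terms modulo $\mathsf{F}_{[m+n+1]}$ and using the collapse formula $\overline{\Gamma_{ij}^{(r,m)}}=\overline{\Gamma_{ij}^{(0,m)}}$ from \eqref{quotient1}, one recovers precisely \eqref{Y4} for the symbols $\overline{\Gamma_{ij}^{(0,m)}}$, with the super-signs $(-1)^{|i||j|+|i||k|+|j||k|}$ arising from the super-flips $\varepsilon_{ij;kl}$ built into the Perk--Schultz $R$-matrix.

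Surjectivity is then immediate: by construction $\mathrm{U}_{\mathbb{A}}$ is generated by the $\gamma_{ij}^{(r)}$, $\widetilde{\gamma}_{ij}^{(r)}$, equivalently by the $\Gamma_{ij}^{(r,m)}$ and $\widetilde{\Gamma}_{ij}^{(r,m)}$, and \eqref{quotient1} shows that every such symbol in $\textrm{gr}\mathrm{U}_{\mathbb{A}}$ reduces to some $\pm\overline{\Gamma_{ij}^{(0,m)}}$, all of which lie in the image of $\varphi$. For injectivity I would exploit matching PBW bases: the super Yangian admits a standard PBW basis of ordered monomials in the $\hat{t}_{ij}^{(m+1)}$, while composing $\varphi$ with the induced quotient map $\textrm{gr}\mathrm{U}_{\mathbb{A}}\twoheadrightarrow \mathrm{U}(\mathfrak{L}\mathfrak{gl}_{M|N})$ coming from Theorem \ref{Universal} sends $\overline{\Gamma_{ij}^{(0,m)}}$ to the scalar multiple $(-1)^{m}E_{ij}(1-x^{-1})^m$ of a basis element of $\mathfrak{L}\mathfrak{gl}_{M|N}$. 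The PBW theorem for $\mathrm{U}(\mathfrak{L}\mathfrak{gl}_{M|N})$ then guarantees that distinct ordered Yangian monomials produce linearly independent images, forcing $\varphi$ to be injective.

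The main obstacle is the first step: the careful combinatorial bookkeeping showing that \eqref{RTT6}, after rescaling and reexpression in terms of the $\Gamma_{ij}^{(r,m)}$, degenerates in $\textrm{gr}\mathrm{U}_{\mathbb{A}}$ to \eqref{Y4} with the correct super-signs. The four Kronecker-delta terms on the RHS of \eqref{RTT6}, the asymmetric $q_i^{\pm\delta_{ik}}$ prefactors, and the boundary cases $r=0$ or $s=0$ (where $\gamma_{ij}^{(0)}$ is defined differently from $\gamma_{ij}^{(r)}$ with $r>0$) all require separate treatment. Once this identity is in hand, surjectivity is immediate from \eqref{quotient1} and injectivity is a routine PBW comparison through $\psi^{\prime}$.
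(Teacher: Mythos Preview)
Your overall plan matches the paper's: it too derives the Yangian relation \eqref{Y4} by rewriting \eqref{RTT6} in terms of the $\Gamma_{ij}^{(r,m)}$ and passing to $\mathsf{F}_{[m+n+1]}/\mathsf{F}_{[m+n+2]}$, reads off surjectivity from \eqref{quotient1}, and defers injectivity to \cite{CG}. One small simplification you are missing: the boundary cases $r=0$ or $s=0$ never arise in the paper's argument. One works with \eqref{RTT6} only for $r,s\geq 1$, proves by induction on $m,n$ the same identity with $\Gamma_{ij}^{(r,m)}$ replacing $\gamma_{ij}^{(r)}$, and then specializes to $(r,s)=(1,1)$ before taking the quotient; the anomalous generators $\gamma_{ii}^{(0)},\widetilde{\gamma}_{ii}^{(0)}$ are avoided entirely.

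Your injectivity argument, however, has a real gap. There is no ``induced quotient map'' $\textrm{gr}\,\mathrm{U}_{\mathbb{A}}\twoheadrightarrow \mathrm{U}(\mathfrak{L}\mathfrak{gl}_{M|N})$ coming from Theorem~\ref{Universal}: the homomorphism $\psi'$ is defined on $\mathrm{U}_{\mathbb{A}}$, and since $\psi'(\Gamma_{ij}^{(0,m+1)})=(-1)^{m+1}E_{ij}(1-x^{-1})^{m+1}\neq 0$ it does not annihilate $\mathsf{F}_{[m+1]}$, hence does not descend to $\mathsf{F}_{[m]}/\mathsf{F}_{[m+1]}$. Even if you filter the target so that $\psi'$ becomes a filtered map, the associated graded of $\psi'$ sends $\hbar=\overline{q-q^{-1}}$ to zero; composing with it therefore kills every PBW monomial carrying an $\hbar$-factor and can at best prove injectivity of $\varphi$ modulo $\hbar$. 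The correct argument (as in \cite{CG,GM}) stays inside $\mathrm{U}_{\mathbb{A}}$: one uses that ordered monomials in the $\Gamma_{ij}^{(0,m)}$ form an $\mathbb{A}$-basis of $\mathrm{U}_{\mathbb{A}}$, so that their images together with powers of $\overline{q-q^{-1}}$ give a $\mathbb{C}$-basis of $\textrm{gr}\,\mathrm{U}_{\mathbb{A}}$, and this matches the PBW basis of $\mathrm{Y}_{\hbar}$ over $\mathbb{C}[\hbar]$.
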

\begin{proof}
We first check that $\varphi$ is a homomorphism.
One gets from \eqref{RTT6} for $r,s\geqslant 1$,
\begin{align*}
&q_i^{-\delta_{ik}}\Gamma_{ij}^{(r,1)}\Gamma_{kl}^{(s,0)}-q_i^{\delta_{ik}}\Gamma_{ij}^{(r,0)}\Gamma_{kl}^{(s,1)}
-\left(q_i^{\delta_{ik}}-q_i^{-\delta_{ik}}\right)\Gamma_{ij}^{(r,0)}\Gamma_{kl}^{(s,0)} \\
&\hspace{4em}-\varepsilon_{ij;kl}\left\{q_j^{-\delta_{jl}}\Gamma_{kl}^{(s,0)}\Gamma_{ij}^{(r,1)}
-q_j^{\delta_{jl}}\Gamma_{kl}^{(s,1)}\Gamma_{ij}^{(r,0)}
-\left(q_j^{\delta_{jl}}-q_j^{-\delta_{jl}}\right)\Gamma_{kl}^{(s,0)}\Gamma_{ij}^{(r,0)}\right\} \\
=&\,\varepsilon_{ik;kl}\left(q_k-q_k^{-1}\right)\times\Big\{\delta_{k<i}\Gamma_{kj}^{(r,0)}\Gamma_{il}^{(s,1)}
+\delta_{i<k}\Gamma_{kj}^{(r,1)}\Gamma_{il}^{(s,0)}+\left(\delta_{k<i}+\delta_{i<k}\right)\Gamma_{kj}^{(r,0)}\Gamma_{il}^{(s,0)} \\
&\hspace{4em}-\left(\delta_{j<l}\Gamma_{kj}^{(s,1)}\Gamma_{il}^{(r,0)}+\delta_{l<j}\Gamma_{kj}^{(s,0)}\Gamma_{il}^{(r,1)}
+\left(\delta_{j<l}+\delta_{l<j}\right)\Gamma_{kj}^{(s,0)}\Gamma_{il}^{(r,0)}\right)\Big\}.
\end{align*}
Using induction on $m,n\geqslant 0$, we have
\begin{align*}
&q_i^{-\delta_{ik}}\Gamma_{ij}^{(r,m+1)}\Gamma_{kl}^{(s,n)}-q_i^{\delta_{ik}}\Gamma_{ij}^{(r,m)}\Gamma_{kl}^{(s,n+1)}
-\left(q_i^{\delta_{ik}}-q_i^{-\delta_{ik}}\right)\Gamma_{ij}^{(r,m)}\Gamma_{kl}^{(s,n)} \\
&\hspace{4em}-\varepsilon_{ij;kl}\left\{q_j^{-\delta_{jl}}\Gamma_{kl}^{(s,n)}\Gamma_{ij}^{(r,m+1)}
-q_j^{\delta_{jl}}\Gamma_{kl}^{(s,n+1)}\Gamma_{ij}^{(r,m)}
-\left(q_j^{\delta_{jl}}-q_j^{-\delta_{jl}}\right)\Gamma_{kl}^{(s,n)}\Gamma_{ij}^{(r,m)}\right\} \\
=&\,\varepsilon_{ik;kl}\left(q_k-q_k^{-1}\right)\times\Big\{\delta_{k<i}\Gamma_{kj}^{(r,m)}\Gamma_{il}^{(s,n+1)}
+\delta_{i<k}\Gamma_{kj}^{(r,m+1)}\Gamma_{il}^{(s,n)}+\left(\delta_{k<i}+\delta_{i<k}\right)\Gamma_{kj}^{(r,m)}\Gamma_{il}^{(s,n)} \\
&\hspace{4em}-\left(\delta_{j<l}\Gamma_{kj}^{(s,n+1)}\Gamma_{il}^{(r,m)}+\delta_{l<j}\Gamma_{kj}^{(s,n)}\Gamma_{il}^{(r,m+1)}
+\left(\delta_{j<l}+\delta_{l<j}\right)\Gamma_{kj}^{(s,n)}\Gamma_{il}^{(r,m)}\right)\Big\}.
\end{align*}
Taking the pair $(r,s)=(1,1)$, one gets,
\begin{align*}
&q_i^{-\delta_{ik}}\Gamma_{ij}^{(1,m+1)}\Gamma_{kl}^{(1,n)}-q_i^{\delta_{ik}}\Gamma_{ij}^{(1,m)}\Gamma_{kl}^{(1,n+1)}
-\left(q_i^{\delta_{ik}}-q_i^{-\delta_{ik}}\right)\Gamma_{ij}^{(1,m)}\Gamma_{kl}^{(1,n)} \\
&\hspace{4em}-\varepsilon_{ij;kl}\left\{q_j^{-\delta_{jl}}\Gamma_{kl}^{(1,n)}\Gamma_{ij}^{(1,m+1)}
-q_j^{\delta_{jl}}\Gamma_{kl}^{(1,n+1)}\Gamma_{ij}^{(1,m)}
-\left(q_j^{\delta_{jl}}-q_j^{-\delta_{jl}}\right)\Gamma_{kl}^{(1,n)}\Gamma_{ij}^{(1,m)}\right\} \\
=&\,\varepsilon_{ik;kl}\left(q_k-q_k^{-1}\right)\times\Big\{\delta_{k<i}\Gamma_{kj}^{(1,m)}\Gamma_{il}^{(1,n+1)}
+\delta_{i<k}\Gamma_{kj}^{(1,m+1)}\Gamma_{il}^{(1,n)}+\left(\delta_{k<i}+\delta_{i<k}\right)\Gamma_{kj}^{(1,m)}\Gamma_{il}^{(1,n)} \\
&\hspace{4em}-\left(\delta_{j<l}\Gamma_{kj}^{(1,n+1)}\Gamma_{il}^{(1,m)}+\delta_{l<j}\Gamma_{kj}^{(1,n)}\Gamma_{il}^{(1,m+1)}
+\left(\delta_{j<l}+\delta_{l<j}\right)\Gamma_{kj}^{(1,n)}\Gamma_{il}^{(1,m)}\right)\Big\}.
\end{align*}
Since both sides of this relation are included in $\mathsf{F}_{[m+n+1]}$ and $\Gamma_{ij}^{(1,m)}=\Gamma_{ij}^{(0,m+1)}+\Gamma_{ij}^{(0,m)}$, one gets for all $m,n\geqslant 0$,
\begin{align*}
&q_i^{-\delta_{ik}}\Gamma_{ij}^{(0,m+1)}\Gamma_{kl}^{(0,n)}-q_i^{\delta_{ik}}\Gamma_{ij}^{(0,m)}\Gamma_{kl}^{(0,n+1)}
-\delta_{ik}(-1)^{|i|}\left(q-q^{-1}\right)\Gamma_{ij}^{(0,m)}\Gamma_{kl}^{(0,n)} \\
&\hspace{4em}-\varepsilon_{ij;kl}\left\{q_j^{-\delta_{jl}}\Gamma_{kl}^{(0,n)}\Gamma_{ij}^{(0,m+1)}
-q_j^{\delta_{jl}}\Gamma_{kl}^{(0,n+1)}\Gamma_{ij}^{(0,m)}-\delta_{jl}(-1)^{|j|}(q-q^{-1})\Gamma_{kl}^{(0,n)}\Gamma_{ij}^{(0,m)}\right\} \\
=&\,\varepsilon_{ik;kl}(-1)^{|k|}\left(q-q^{-1}\right)\times\left\{\left(\delta_{k<i}+\delta_{i<k}\right)\Gamma_{kj}^{(0,m)}\Gamma_{il}^{(0,n)}
-\left(\delta_{j<l}+\delta_{l<j}\right)\Gamma_{kj}^{(0,n)}\Gamma_{il}^{(0,m)}\right\},
\end{align*}
modulo $\mathsf{F}_{[m+n+2]}$. That is to say, we have in $\mathsf{F}_{[m+n+1]}/\mathsf{F}_{[m+n+2]}$
\begin{align*}
&\overline{\Gamma_{ij}^{(0,m+1)}}\ \overline{\Gamma_{kl}^{(0,n)}}-\overline{\Gamma_{ij}^{(0,m)}}\ \overline{\Gamma_{kl}^{(0,n+1)}}
-\varepsilon_{ij;kl}\left(\,\overline{\Gamma_{kl}^{(0,n)}}\ \overline{\Gamma_{ij}^{(0,m+1)}}
-\overline{\Gamma_{kl}^{(0,n+1)}}\ \overline{\Gamma_{ij}^{(0,m)}}\,\right) \\
=&\,\varepsilon_{ik;kl}(-1)^{|k|}\left(\,\overline{q-q^{-1}}\,\right)\times\left(\,\overline{\Gamma_{kj}^{(0,m)}}\ \overline{\Gamma_{il}^{(0,n)}}
-\overline{\Gamma_{kj}^{(0,n)}}\ \overline{\Gamma_{il}^{(0,m)}}\,\right).
\end{align*}
It coincides with relation \eqref{Y4} in $\mathrm{Y}_{\hbar}(\mathfrak{gl}_{M|N})$ if we set $\hbar=\overline{q-q^{-1}}\in\mathsf{F}_{[1]}/\mathsf{F}_{[2]}$.

Next, we shall show that $\varphi$ is bijective.
The surjectivity of $\varphi$ follows from \eqref{quotient1}.
The rest of this proof is to verify the injectivity of $\varphi$. Set $Y=\mathrm{Y}_{\hbar}(\mathfrak{gl}_{M|N})$. Introduce the filtration
\begin{gather}\label{filtration:y}
Y\supset \hbar Y\supset \hbar^2 Y\supset \cdots
\end{gather}
on $Y$. The associated graded algebra is denoted by $\overline{Y}=\bigoplus_{s=0}^{\infty} \hbar^sY/\hbar^{s+1}Y$. Denote the image of the generators $\hat{t}_{ij}^{(m+1)}$ on $\overline{Y}$ by $\bar{t}_{ij}^{(m+1)}$. Then the ordered monomials
\begin{gather*}
\hbar^s\bar{t}_{i_1,j_1}^{(r_1)}\bar{t}_{i_2,j_2}^{(r_2)}\cdots \bar{t}_{i_t,j_t}^{(r_t)}
\end{gather*}
with the conditions
\begin{gather*}
t\geqslant 0,\quad \textrm{and}~~\bar{t}_{i_a,j_a}^{(r_a)}\prec\bar{t}_{i_{a+1},j_{a+1}}^{(r_{a+1})}(1\leqslant a\leqslant t-1)~~\textrm{if}~~|i_a|+|j_a|=|i_{a+1}|+|j_{a+1}|=\bar{1}
\end{gather*}
for all $s\geqslant 0$ constitute a basis of the component $\hbar^sY/\hbar^{s+1}Y$. There exists a natural isomorphism $\pi_{Y;s}$ of linear superspace between $Y/\hbar Y$ and $\hbar^sY/\hbar^{s+1}Y$ such that $\bar{y}\mapsto \hbar^s \bar{y}$ for any $\bar{y}\in Y/\hbar Y$.

Set $X_{\mathbb{A}}=\textrm{gr}\mathrm{U}_{\mathbb{A}}$ and $\zeta_{ij}^{(m)}=(-1)^{|i||j||+|j|}\overline{\Gamma_{ij}^{(0,m)}}$. we introduce a filtration on $X_{\mathbb{A}}$ in a similar way as \eqref{filtration:y}. Then the graded algebra is given by $\overline{X}_{\mathbb{A}}=\bigoplus_{s=0}^{\infty}\hbar^sX_{\mathbb{A}}/\hbar^{s+1}X_{\mathbb{A}}$. Denote the image of the elements $\zeta_{ij}^{(m)}$ on $\overline{X}_{\mathbb{A}}$ by $\bar{\zeta}_{ij}^{(m)}$. There also exists a natural isomorphism $\pi_{X;s}$ of linear superspace between $X_{\mathbb{A}}/\hbar X_{\mathbb{A}}$ and $\hbar^sX_{\mathbb{A}}/\hbar^{s+1}X_{\mathbb{A}}$ such that $\bar{x}\mapsto \bar{\hbar}^s \bar{x}$ for any $\bar{x}\in X_{\mathbb{A}}/\hbar X_{\mathbb{A}}$.

Take note of that $\varphi$ induces a morphism $\bar{\varphi}$ between the graded algebras. We are left to show the injectivity of $\bar{\varphi}$. Suppose that there exists a nontrivial linear combination
\begin{gather*}
\Omega_s=\sum_{\mathbf{i},\mathbf{j},\mathbf{r}} c_{\mathbf{i},\mathbf{j};s}^{\mathbf{r}}y_{\mathbf{i},\mathbf{j};s}^{\mathbf{r}}=0,
\end{gather*}
where $c_{\mathbf{i},\mathbf{j};s}^{\mathbf{r}}\in \mathbb{A}$, $y_{\mathbf{i},\mathbf{j};s}^{\mathbf{r}}=\bar{\hbar}^s \bar{t}_{i_1,j_1}^{(r_1)}\bar{t}_{i_2,j_2}^{(r_2)}\cdots \bar{t}_{i_t,j_t}^{(r_t)}$ and $\mathbf{i},\mathbf{j},\mathbf{r}$ denote the sequences $(i_1,\ldots,i_t)$, $(j_1,\ldots,j_t)$ and $(r_1,\ldots,r_t)$, respectively. The map $\psi^{\prime}$ induces an epimorphism $\bar{\psi}^{\prime}$ between $X_{\mathbb{A}}/\hbar{X}_{\mathbb{A}}$ and $\mathrm{U}\big(\mathfrak{Lg}_{M|N}\big)$ such that $\bar{\psi}^{\prime}\left(\bar{\zeta}_{ij}^{(m)}\right)=(-1)^m(-1)^{|i||j|}E_{ij}\left(1-x^{-1}\right)^m$. By Proposition \ref{U:ordered}, it is evident that the set of all ordered monomials \eqref{X:order:1} by substituting $E_{ij}\left(1-x^{-1}\right)^m$ for $x_{ij}^{r}$ with conditions \eqref{restriction2} is linearly independent, which forces all coefficients $c_{\mathbf{ij};0}^{\mathbf{r}}=0$. Then the restriction of $\bar{\varphi}$ on $Y/\hbar Y$ is injective. Moreover, the composite map $\pi_{Y;s}\circ \bar{\varphi}\circ \pi_{X;s}$ is also an injection between $\hbar^s Y/\hbar^{s+1} Y$ and $\hbar^s X_{\mathbb{A}}/\hbar^{s+1} X_{\mathbb{A}}$, proving the injectivity of $\bar{\varphi}$.

Finally, the injectivity of $\varphi$ can be obtained immediately since $\varphi$ preserves the filtration \eqref{filtration:y} on $Y$.

\end{proof}

\section{Twisted super Yangian and Twisted quantum loop superalgebra}
In this section, we focus on the case of twisted super Yangian.  We will review the definition of twisted super Yangian, more details can be found in \cite{AACFR, BR}. Let $I_+=\{1,2,\cdots,M\}$, $I_-=I\backslash I_+$, and $N=2n$. For each index $i\in I$, we  define $\theta_i$ and  $\bar{i}$ as follows.
\begin{equation*}
    \begin{aligned}
    	\theta_i=\begin{cases}
        1,&\textrm{if}~i\in I_+,\\
        (-1)^{i-M-1},&\textrm{if}~i\in I_-,
        \end{cases}
    \end{aligned}
    \qquad
    \begin{aligned}
        \bar{i}=\begin{cases}
        M+1-i,&\textrm{if}~i\in I_+,\\
        2M+2n+1-i,&\textrm{if}~i\in I_-.
        \end{cases}
    \end{aligned}
\end{equation*}

Introduce the super-transposition $\iota$\footnote{The super-transposition $\iota$ is a slight different from that transposition $t$ introduced in \cite{BR}.}
for a matrix $X=\sum_{i,j\in I}E_{ij}\otimes X_{ij}$ by
\begin{gather*}
X^{\iota}=\sum_{i,j\in I}(-1)^{|i||j|+|i|}\theta_i\theta_jE_{\bar{j}\bar{i}}\otimes X_{ij}=\sum_{i,j\in I}E_{ij}\otimes X_{ij}^{\iota},
\end{gather*}
where $$X_{ij}^{\iota}=(-1)^{|i||j|+|i|}\theta_i\theta_jX_{\bar{j}\bar{i}}.$$
Similar to the usual transposition of non-super case, the equality $(X^{\iota})^{\iota}=X$ still holds. Moreover, $(XY)^{\iota}=Y^{\iota} X^{\iota}$ holds if either $X$ or $Y$ is a constant matrix. The notation $\iota_i$ denotes the partial super-transposition by applying $\iota$ to the $i$-th tensor factor, e.g. if $X=X_1\otimes X_2$, $X^{\iota_1}=(X_1)^{\iota}\otimes X_2$ and $X^{\iota_2}=X_1\otimes (X_2)^{\iota}$.
The super-involution $\tau$ of $\mathfrak{gl}_{M|2n}$ is given by
\begin{gather*}
\tau(E_{ij})=-E_{ij}^{\iota}.
\end{gather*}
The fixed point sub-superalgebra $\mathfrak{osp}_{M|2n}$ of $\mathfrak{gl}_{M|2n}$ with respect to super-involution $\tau$ is generated by the generators $E_{ij}-(-1)^{|i||j|+|i|}\theta_i\theta_j E_{\bar{j}\bar{i}}$.
Extending $\tau$ to the polynomial current Lie superalgebra $\mathfrak{gl}_{M|N}[x]=\mathfrak{gl}_{M|N}\otimes \mathbb{C}[x]$ and the loop superalgebra $\mathfrak{L}\mathfrak{gl}_{M|N}$ by
\begin{gather*}
\tau\left(E_{ij}\otimes x^r\right)=\tau\left(E_{ij}\right)\otimes(-x)^r,\quad \textrm{for}~r\geqslant 0,\quad\text{and}\quad\tau\left(E_{ij}\otimes x^r\right)=\tau\left(E_{ij}\right)\otimes x^{-r},\quad \textrm{for}~r\in\mathbb{Z},
\end{gather*}
respectively.

\subsection{Twisted super Yangian $\mathrm{Y}^{tw}\big(\mathfrak{osp}_{M|2n}\big)$}
Similar to  twisted Yangian case, the twisted super Yangian $\mathrm{Y}^{tw}\big(\mathfrak{osp}_{M|2n}\big)$ can be defined as a sub-superalgebra of the super Yangian $\mathrm{Y}\big(\mathfrak{gl}_{M|2n}\big)$.
\begin{definition}(\cite{AACFR, BR})
Let
\begin{gather}\label{Ytw1}
\mathsf{S}(u)=\sum_{i,j\in I}\sum_{m\geq 0}E_{ij}\otimes s_{ij}^{(m)}u^{-m}, \quad\textrm{with } s_{ij}^{(0)}=\delta_{ij}
\end{gather}
be the element of $\textrm{End}(V)\otimes \mathrm{Y}\big(\mathfrak{gl}_{M|2n}\big)\big[\big[u^{-1}\big]\big]$ such that $\mathsf{S}(u)=\mathsf{T}(u)\left(\mathsf{T}(-u)\right)^{\iota}$. The twisted super Yangian $\mathrm{Y}^{tw}\big(\mathfrak{osp}_{M|2n}\big)$ is defined by the generators $s_{ij}^{(m)}$ with $i,j\in I$ and $m\geqslant 0$.
\end{definition}
The generators satisfy the following relations
\begin{align}\label{Ytw2}
\mathsf{R}^{12}(u-v)\mathsf{S}^1(u)\left(\mathsf{R}^{12}(-u-v)\right)^{\iota_1}\mathsf{S}^2(u)
&=\mathsf{S}^2(u)\left(\mathsf{R}^{12}(-u-v)\right)^{\iota_1}\mathsf{S}^1(u)\mathsf{R}^{12}(u-v), \\
\label{Ytw3}
 \left(\mathsf{S}(-u)\right)^{\iota}&=\mathsf{S}(u)+\frac{\mathsf{S}(u)-\mathsf{S}(-u)}{2u}.
\end{align}

We will refer to \eqref{Ytw2} and \eqref{Ytw3} as the \textit{quaternary} and \textit{supersymmetric} relations, respectively.
The abstract superalgebra $\mathcal{Y}$ generated by 1 and $y_{ij}^{(m)}$ for $i,j\in I, m>0$, subjected to quaternary and supersymmetric relations with respect to the matrix $Y(u)=\left(y_{ij}(u)\right)$ where $y_{ij}(u)=\delta_{ij}+\sum_{m\geqslant 1}y_{ij}^{(m)}u^{-m}$ is isomorphic to the twisted super Yangian $\mathrm{Y}^{tw}\big(\mathfrak{osp}_{M|2n}\big)$. The super Yangian
$\mathrm{Y}^{tw}\big(\mathfrak{osp}_{M|2n}\big)$ is a left coideal of $\mathrm{Y}(\mathfrak{gl}_{M|2n})$ with respect to the comultiplication $\Delta_1$ such that
\begin{align*}
\Delta_1\left(s_{i,j}^{(m)}\right)=\sum_{x,y\in I}\sum_{p=0}^{m}\sum_{a=0}^{p}(-1)^{p-a}(-1)^{|i||j|+|x||j|}\theta_j\theta_y t_{i,x}^{(a)}t_{\bar{j},\bar{y}}^{(p-a)}\otimes s_{x,y}^{(m-p)}.
\end{align*}

\begin{remark}
The map
\begin{gather*}
s_{ij}(u)\mapsto \sum_{k\in I}(-1)^{|k||j|+|k|}\operatorname{sgn}\left(\frac{2M+2n+1}{2}-k\right)\operatorname{sgn}\left(\frac{2M+2n+1}{2}-j\right) t_{ik}(u)t_{\bar{j}\bar{k}}(-u)
\end{gather*}
extends to an isomorphism between $\mathrm{Y}^{tw}\big(\mathfrak{osp}_{M|2n}\big)$ and the superalgebra $Y(M|2n)^+$ defined in \cite{BR}.
\end{remark}

We can also define the superalgebra $\mathrm{Y}^{tw}\big(\mathfrak{osp}_{M|2n}\big)$ by taking a deformation of generators related to $\hbar\in\mathbb{C}\backslash\{0\}$: $\hat{s}_{ij}^{(m)}=\hbar^{1-m}s_{ij}^{(m)}$, denoted by $\mathrm{Y}^{tw}_{\hbar}(\mathfrak{osp}_{M|2n})$. Moreover, it degenerates into $\mathrm{U}\big(\mathfrak{osp}_{M|2n}[x]\big)$ as $\hbar\mapsto 0$. As a sub-superalgebra of $\mathrm{Y}_{\hbar}\big(\mathfrak{gl}_{M|2n}\big)$, one gets that
\begin{gather}\label{Ytw4}
\hat{s}_{ij}^{(m)}=\hbar\sum_{k\in I}\sum_{p=1}^{m-1}
(-1)^{p}(-1)^{|i||j|+|j||k|}\theta_k\theta_j\hat{t}_{ik}^{(m-p)}\hat{t}_{\bar{j}\bar{k}}^{(p)}+(-1)^m\theta_i\theta_j\hat{t}_{\bar{j}\bar{i}}^{(m)}
+(-1)^{|i||j|+|j|}\hat{t}_{ij}^{(m)},
\end{gather}
for $i,j\in I$ and $m\geqslant0$.

\subsection{Twisted quantum loop superalgebra $\mathrm{U}_q^{tw}$}
In $\mathrm{U}_q\big(\mathfrak{L}\mathfrak{gl}_{M|2n}\big)$, we define
\begin{gather*}
S(u)=T(u)\left(\widetilde{T}\left(u^{-1}\right)\right)^{\iota}=\sum_{i,j\in I}E_{ij}\otimes S_{ij}(u)\quad\textrm{with }S_{ij}(u)=\sum\limits_{r\geq 0}S_{ij}^{(r)}u^r.
\end{gather*}

\begin{definition}
The twisted quantum loop superalgebra $\mathrm{U}_q^{tw}$ is an associative superalebra generated by the coefficients $S_{ij}^{(r)}$ of the matrix elements $S_{ij}(u)$ for $i,j\in I$.
\end{definition}
It is clear that
\begin{align}\label{Utw1}
&S_{ij}^{(r)}=\sum_{k\in I}\sum_{p=0}^{r}(-1)^{|i||j|+|i||k|}\theta_k\theta_j
T_{ik}^{(r-p)}\widetilde{T}_{\bar{j}\bar{k}}^{(p)}, \\ \label{Utw2}
&S_{ij}^{(0)}=0,\quad\textrm{if}~~j\in I_+,i\in I_-.
\end{align}

Set $Q=P^{\iota_1}$. Note that $PQ=QP=Q$, and $P^{\iota_1\iota_2}=P$.
\begin{lemma}\label{eqs}
The following relations hold
\begin{align} \label{eq1}
&\Re_q(u,v)=uv\Re_q(v^{-1},u^{-1}), \\ \label{eq2}
&\Re_q(u,v)\left(\Re_q(u^{-1},v)\right)^{\iota_1}=\left(\Re_q(u^{-1},v)\right)^{\iota_1}\Re_q(u,v), \\ \label{eq3}
&\left(\widetilde{T}^1(u)\right)^{\iota}\left(\Re_q^{12}(u,v)\right)^{\iota_1}T^2(v)
=T^2(v)\left(\Re_q^{12}(u,v)\right)^{\iota_1}\left(\widetilde{T}^1(u)\right)^{\iota}, \\ \label{eq4}
&\Re_q^{12}(u,v)\left(\widetilde{T}^1(u^{-1})\right)^{\iota}\left(\widetilde{T}^2(v^{-1})\right)^{\iota}
=\left(\widetilde{T}^2(v^{-1})\right)^{\iota}\left(\widetilde{T}^1(u^{-1})\right)^{\iota}\Re_q^{12}(u,v).
\end{align}

\end{lemma}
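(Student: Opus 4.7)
The plan is to establish the four identities sequentially. Relations \eqref{eq1} and \eqref{eq2} concern only the Perk--Schultz R-matrix $\Re_q(u,v)$ and are verified by direct computation from its explicit expression. Relations \eqref{eq3} and \eqref{eq4} are ``transposed'' versions of the mixed and $\widetilde T$-$\widetilde T$ RTT-relations \eqref{RTT5} and \eqref{RTT4}, and are obtained by applying the partial super-transpositions $\iota_1, \iota_2$ to these RTT-relations together with \eqref{eq1}--\eqref{eq2}.

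For \eqref{eq1}, substituting $u\to v^{-1}$, $v\to u^{-1}$ in the Perk--Schultz formula and multiplying by $uv$, the diagonal part rescales as $uv\cdot v^{-1}q_i^{\delta_{ij}}-uv\cdot u^{-1}q_i^{-\delta_{ij}}=uq_i^{\delta_{ij}}-vq_i^{-\delta_{ij}}$, and the two triangular parts rescale respectively as $uv\cdot v^{-1}(q_i-q_i^{-1})=u(q_i-q_i^{-1})$ and $uv\cdot u^{-1}(q_j-q_j^{-1})=v(q_j-q_j^{-1})$, recovering $\Re_q(u,v)$. For \eqref{eq2}, I would write $(\Re_q(u^{-1},v))^{\iota_1}$ explicitly using $E_{ii}^\iota=E_{\bar i\bar i}$ and $E_{ji}^\iota=(-1)^{|i||j|+|j|}\theta_i\theta_j E_{\bar i\bar j}$. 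Both $\Re_q(u,v)$ and its partial transpose have a block structure in which the non-commuting pieces are controlled by the permutation $P$ and its partial transpose $Q=P^{\iota_1}$, so the commutator can be checked coefficient by coefficient using $PQ=QP=Q$ and $P^{\iota_1\iota_2}=P$.

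For \eqref{eq3}, apply the partial super-transposition $\iota_1$ to the mixed relation \eqref{RTT5}: since $T^2(v)$ acts trivially in the first $\mathrm{End}(V)$ factor, we have $(T^2(v))^{\iota_1}=T^2(v)$, while $(\widetilde T^1(u))^{\iota_1}=(\widetilde T^1(u))^{\iota}$ and similarly for $\Re_q^{12}(u,v)$. One then obtains \eqref{eq3} after reorganising the resulting products. For \eqref{eq4}, apply $\iota_1\iota_2$ to the RTT-relation \eqref{RTT4} with arguments $u^{-1}, v^{-1}$, use \eqref{eq1} to write $\Re_q(u^{-1},v^{-1})$ as $(uv)^{-1}\Re_q(v,u)$, and then invoke \eqref{eq2} (combined with the $1\leftrightarrow 2$ symmetry of the Yang--Baxter setup) to convert the transposed R-matrix factor back to $\Re_q(u,v)$.

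The main obstacle is the super-sign bookkeeping when $\iota_1$ (or $\iota_1\iota_2$) is passed through a product of algebra-valued matrices: because the multiplication on $\mathrm{End}(V)^{\otimes 2}\otimes\mathrm{U}_q(\mathfrak L\mathfrak{gl}_{M|2n})$ is $\mathbb Z_2$-graded while $\iota$ affects only the matrix indices, the naive identity $(XY)^{\iota_1}=Y^{\iota_1}X^{\iota_1}$ holds only up to a controlled sign/ordering correction. Once this correction is tabulated (for instance, by unpacking the calculations in components using $E_{ij}^\iota=(-1)^{|i||j|+|i|}\theta_i\theta_j E_{\bar j\bar i}$ and carefully matching parities of the coefficients $T_{ij}(u), \widetilde T_{ij}(u)$), the identities \eqref{eq3}--\eqref{eq4} fall out mechanically from the untwisted RTT-relations.
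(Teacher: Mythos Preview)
Your treatment of \eqref{eq1}, \eqref{eq2}, and \eqref{eq3} matches the paper's: the first two are direct computations, and the third follows by applying $\iota_1$ to \eqref{RTT5}, exactly as you say.

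For \eqref{eq4}, however, your proposed route has a genuine gap. Applying $\iota_1\iota_2$ to \eqref{RTT4} in one stroke runs into two obstacles. First, the rule $(XY)^{\iota}=Y^{\iota}X^{\iota}$ is only available when one factor has scalar entries; in the product $\widetilde T^1(u)\widetilde T^2(v)$ neither factor is constant in \emph{both} matrix slots simultaneously, so the naive anti-multiplicativity of $\iota_1\iota_2$ fails (the algebra entries $\widetilde T_{ij}(u),\widetilde T_{kl}(v)$ end up in the wrong order). Second, and more concretely, $(\Re_q(u,v))^{\iota_1\iota_2}$ is \emph{not} equal to $\Re_q(u,v)$ or $\Re_q(v,u)$: although $P^{\iota_1\iota_2}=P$ and the diagonal part is preserved, the off-diagonal terms with one index in $I_+$ and one in $I_-$ pick up an extra $u\leftrightarrow v$ swap that the same-parity terms do not. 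Relation \eqref{eq2} is a commutation statement between $\Re_q(u,v)$ and $(\Re_q(u^{-1},v))^{\iota_1}$, not an identification of $(\Re_q)^{\iota_1\iota_2}$ with $\Re_q$, so it cannot be invoked to repair this.

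The paper circumvents both issues by never applying a two-slot transposition. It introduces the conjugation $\sigma(f)=P^{12}fP^{12}$ and alternates $\sigma$ with the single-slot transposition $\iota_2$: apply $\sigma$ to \eqref{RTT4}, then $\iota_2$, then $\sigma$ and $\iota_2$ again (using $PQ=QP=Q$ with $Q=P^{\iota_1}$), then $\sigma$ once more, and finally substitute $(u,v)\mapsto(v^{-1},u^{-1})$ and use \eqref{eq1}. At each $\iota_2$ step only one $\widetilde T$-factor lives nontrivially in slot~2, so the anti-multiplicativity rule applies cleanly, and the R-matrix is handled via $Q$ rather than via a full $\iota_1\iota_2$. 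This staged argument is what you are missing for \eqref{eq4}.
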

\begin{proof}
Relations \eqref{eq1} and \eqref{eq2} can be checked by straightforward calculations. Applying the partial transposition $\iota_1$ to both sides of \eqref{RTT5}, we get relation \eqref{eq3}. For relation \eqref{eq4}, we define operator $\sigma$ given by $\sigma(f)=P^{12}fP^{12}$ for $f\in\textrm{End}V^{\otimes 2}\otimes \mathrm{U}_q\big(\mathfrak{L}\mathfrak{gl}_{M|2n}\big)\big[\big[u^{\pm 1},v^{\pm 1}\big]\big]$. We apply the operator $\sigma$ on relation \eqref{RTT4} to get
\begin{gather}\label{eq5}
P\Re_q^{12}(u,v)P\widetilde{T}^2(u)\widetilde{T}^1(v)
=\widetilde{T}^1(v)\widetilde{T}^2(u)P\Re_q^{12}(u,v)P.
\end{gather}
We then use the super-transposition $\iota_2$ on \eqref{eq5}:
\begin{gather}\label{eq6}
\left(\widetilde{T}^2(u)\right)^{\iota}Q\left(\Re_q^{12}(u,v)\right)^{\iota_2}Q\widetilde{T}^1(v)
=\widetilde{T}^1(v)Q\left(\Re_q^{12}(u,v)\right)^{\iota_2}Q\left(\widetilde{T}^2(u)\right)^{\iota}.
\end{gather}
Applying $\sigma$ and $\iota_2$ once again to \eqref{eq6} and $PQ=QP=Q$, we obtain
\begin{gather}\label{eq7}
P\Re_q^{12}(u,v)P\left(\widetilde{T}^2(v)\right)^{\iota}\left(\widetilde{T}^1(u)\right)^{\iota}
=\left(\widetilde{T}^1(u)\right)^{\iota}\left(\widetilde{T}^2(v)\right)^{\iota}P\Re_q^{12}(u,v)P.
\end{gather}
Finally, using $\sigma$ on \eqref{eq7} and replacing the pair $(u,v)$ with $(v^{-1},u^{-1})$, we find that relation \eqref{eq4} can be deduced from $\eqref{eq1}$.

\end{proof}

Lemma \ref{eqs} implies that the matrix $S(u)$ satisfies the quaternary relation,
\begin{gather}\label{Utw3}
\Re_q^{12}(u,v)S^1(u)\left(\Re_q^{12}\left(u^{-1},v\right)\right)^{\iota_1}S^2(v)
=S^2(v)\left(\Re_q^{12}\left(u^{-1},v\right)\right)^{\iota_1}S^1(u)\Re_q^{12}(u,v).
\end{gather}

\begin{remark}
Let $O=\sum\limits_{k\in I}E_{k\bar{k}}$ and  $G=\sum\limits_{k\in I}\left(E_{2k-1,2k}+qE_{2k,2k-1}\right)$.
Then the subalgebras generated by the coefficients of the matrix elements of the matrix $S'(u)=(O\otimes 1)T(u)(O\otimes 1)\left(\widetilde{T}\left(u^{-1}\right)\right)^{\iota}$ and $S''(u)=(O\otimes 1)T(u)(OG\otimes 1)\left(\widetilde{T}\left(u^{-1}\right)\right)^{\iota}$ with the defining relations are isomorphic to $\mathrm{Y}_q^{tw}\big(\mathfrak{o}_M\big)$ if $n=0$ and $\mathrm{Y}_q^{tw}\big(\mathfrak{sp}_{2n}\big)$ if $M=0$, respectively. Please see the definition of twisted $q$-Yangians $\mathrm{Y}_q^{tw}\big(\mathfrak{o}_M\big)$ and $\mathrm{Y}_q^{tw}\big(\mathfrak{sp}_{2n}\big)$ in \cite{MRS}.
\end{remark}
\begin{remark}
The twisted quantum loop superalgebra $\mathrm{U}_q^{tw}$ is a left coideal of $\mathrm{U}_q\big(\mathfrak{L}\mathfrak{gl}_{M|2n}\big)$ such that
\begin{gather*}
\Delta_2\left(S_{ij}^{(r)}\right)=\sum_{x,y\in I}\sum_{p=0}^r\sum_{a=0}^{p}(-1)^{(|i|+|x|)(|j|+|x|)+|y|(|j|+|y|)}\theta_y\theta_j
T_{ix}^{(a)}\widetilde{T}_{\bar{j}\bar{y}}^{(p-a)}\otimes S_{xy}^{(r-p)}.
\end{gather*}
\end{remark}

The next proposition is essential as it establishes a weak PBW theorem for twisted quantum loop superalgebra $\mathrm{U}_q^{tw}$.
\begin{proposition}\label{order}
Let $\mathbf{B}_q$ be the associative superalgebra generated by elements $B_{ij}^{(r)}$ for $i,j\in I$, $r\in\mathbb{Z}_{\geqslant 0}$ and with the defining relation
\begin{gather}\label{B:order:0}
\Re_q^{12}(u,v)B^1(u)\left(\Re_q^{12}\left(u^{-1},v\right)\right)^{\iota_1}B^2(v)
=B^2(v)\left(\Re_q^{12}\left(u^{-1},v\right)\right)^{\iota_1}B^1(u)\Re_q^{12}(u,v),
\end{gather}
where
\begin{gather*}
B(u)=\sum_{i,j\in I}E_{ij}\otimes B_{ij}(u)~~\textrm{with}~~B_{ij}(u)=\sum_{r\geq 0}B_{ij}^{(r)}u^r.
\end{gather*}
Define an order on the generators: $B_{i_1,j_1}^{(r_1)}\preceq B_{i_2,j_2}^{(r_2)}$ if and only if $(i_1,j_1;r_1)\preceq(i_2,j_2;r_2)$. Then the superalgebra $\mathbf{B}_q$ is spanned by the ordered monomials
\begin{gather}\label{B:order:1}
B_{i_1,j_1}^{(r_1)}B_{i_2,j_2}^{(r_2)}\cdots B_{i_t,j_t}^{(r_t)}
\end{gather}
with conditions
\begin{gather}\label{restriction4}
t\geqslant 0,\quad \textrm{and}~~B_{i_a,j_a}^{(r_a)}\prec B_{i_{a+1},j_{a+1}}^{(r_{a+1})}(1\leqslant a\leqslant t-1)~~\textrm{if}~~|i_a|+|j_a|=|i_{a+1}|+|j_{a+1}|=\bar{1}.
\end{gather}
In particular, this statement still holds for $\mathrm{U}_q^{tw}$.
\end{proposition}

\begin{proof}
Denote that
\begin{align*}
&\alpha_{ijkl}(u,v)=(-1)^{|i||j|+|i||k|+|j||k|}\left(q_k^{\delta_{kj}}-uvq_k^{-\delta_{kj}}\right)B_{i\bar{j}}(u)B_{k\bar{l}}(v) \\
&\qquad\qquad\qquad+(-1)^{(|j|+|k|)|l|}\theta_j\theta_k\left(q-q^{-1}\right)\left(\delta_{k<j}+\delta_{k>j}uv\right)B_{i\bar{k}}(u)B_{j\bar{l}}(v), \\
&\beta_{ijkl}(v,u)=(-1)^{|k||l|+|j||k|+|j||l|}\left(q_k^{\delta_{kj}}-uvq_k^{-\delta_{kj}}\right)B_{i\bar{j}}(v)B_{k\bar{l}}(u) \\
&\qquad\qquad\qquad+(-1)^{(|j|+|k|)|i|}\theta_j\theta_k\left(q-q^{-1}\right)\left(\delta_{\bar{k}<\bar{j}}+\delta_{\bar{k}>\bar{j}}uv\right)B_{i\bar{k}}(v)B_{j\bar{l}}(u).
\end{align*}
Then we can rewrite \eqref{B:order:0} as
\begin{equation}\label{B:order:2}
\begin{split}
&\left(uq_i^{\delta_{ik}}-vq_i^{-\delta_{ik}}\right)\alpha_{ijkl}(u,v)
+\varepsilon_{ik;kj}\left(q_k-q_k^{-1}\right)\left(\delta_{k<i}u+\delta_{k>i}v\right)\alpha_{kjil}(u,v) \\
=&\left(uq_j^{\delta_{jl}}-vq_j^{-\delta_{jl}}\right)\beta_{klij}(v,u)
+\varepsilon_{kl;lj}\left(q_l-q_l^{-1}\right)\left(\delta_{\bar{j}<\bar{l}}u+\delta_{\bar{j}>\bar{l}}v\right)\beta_{kjil}(v,u).
\end{split}
\end{equation}
Introduce a filtration on the superalgebra $\mathbf{B}_q$ and set $\deg B_{ij}^{(r)}=r$. It suffices to prove that the conclusion of this lemma still holds for the graded algebra $\mathbf{B}'_q:=gr\mathbf{B}_q$ with respect to this filtration. For convenience, we still denote the image $\overline{B_{ij}^{(r)}}$ in $\mathbf{B}'_q$ by $B_{ij}^{(r)}$, and we also denote $B_{ij}(u)$ in the same way. The notations $\alpha_{ijkl}(u,v)$ and $\beta_{ijkl}(v,u)$ in relation \eqref{B:order:2} can be replaced in $\mathbf{B}'_q$ with
\begin{equation*}
\begin{split}
&\alpha_{ijkl}(u,v)=(-1)^{|i||j|+|i||k|+|j||k|}q_k^{-\delta_{kj}}B_{i\bar{j}}(u)B_{k\bar{l}}(v) \\
&\qquad\qquad\qquad+(-1)^{(|j|+|k|)|l|}\theta_j\theta_k\left(q-q^{-1}\right)\delta_{k>j}B_{i\bar{k}}(u)B_{j\bar{l}}(v), \\
&\beta_{ijkl}(v,u)=(-1)^{|k||l|+|j||k|+|j||l|}q_k^{-\delta_{kj}}B_{i\bar{j}}(v)B_{k\bar{l}}(u) \\
&\qquad\qquad\qquad+(-1)^{(|j|+|k|)|i|}\theta_j\theta_k\left(q-q^{-1}\right)\delta_{\bar{k}>\bar{j}}B_{i\bar{k}}(v)B_{j\bar{l}}(u).
\end{split}
\end{equation*}

Furthermore, introduce the filtration on $\mathbf{B}'_q$ with $\deg B_{ij}^{(r)}=i$. We shall utilize the graded algebra $gr\mathbf{B}'_q$ subject to this filtration defined by
\begin{align*}
    gr\mathbf{B}'_q=\bigoplus_{p\geqslant 0}\mathsf{B}'_{[p+1]}/\mathsf{B}'_{[p]}\ \textrm{with}\ \mathsf{B}'_{[p]}:=\left\{X\in \mathbf{B}'_q|\deg (X)\leqslant p\right\}.
\end{align*}
Relation \eqref{B:order:2} for $\mathbf{B}'_q$ has the form
\begin{equation}\label{B:order:3}
\begin{split}
&(-1)^{|i||j|+|i||k|+|j||k|}q_k^{-\delta_{kj}}\left(uq_i^{\delta_{ik}}-vq_i^{-\delta_{ik}}\right)B_{i\bar{j}}(u)B_{k\bar{l}}(v)+Q_1 \\
&+\left(q-q^{-1}\right)q_i^{-\delta_{ij}}\left(\delta_{k<i}u+\delta_{k>i}v\right)B_{k\bar{j}}(u)B_{i\bar{l}}(v)+Q_2 \\
=&(-1)^{|i||j|+|i||l|+|j||l|}q_i^{-\delta_{il}}\left(uq_j^{\delta_{jl}}-vq_j^{-\delta_{jl}}\right)B_{k\bar{l}}(v)B_{i\bar{j}}(u)+Q_3 \\
&+\varepsilon_{ik;jl}\left(q-q^{-1}\right)q_i^{-\delta_{ij}}\left(\delta_{\bar{j}<\bar{l}}u+\delta_{\bar{j}>\bar{l}}v\right)B_{k\bar{j}}(v)B_{i\bar{l}}(u)+Q_4,
\end{split}
\end{equation}
where
\begin{align*}
&Q_1=-(-1)^{(|j|+|k|)|l|}\theta_j\theta_k\left(q-q^{-1}\right)\left(uq_i^{\delta_{ik}}-vq_i^{-\delta_{ik}}\right)
\delta_{k>j}B_{i\bar{k}}(u)B_{j\bar{l}}(v), \\
&Q_2=-(-1)^{|i||j|}\varepsilon_{ij;kl}\theta_i\theta_j\left(q-q^{-1}\right)^2\left(\delta_{k<i}u+\delta_{k>i}v\right)
\delta_{i>j}B_{k\bar{i}}(u)B_{j\bar{l}}(v), \\
&Q_3=-(-1)^{|i||k|+|k||l|}\theta_i\theta_l\left(q-q^{-1}\right)\left(uq_j^{\delta_{jl}}-vq_j^{-\delta_{jl}}\right)
\delta_{\bar{i}>\bar{l}}B_{k\bar{i}}(v)B_{l\bar{j}}(u), \\
&Q_4=-(-1)^{|i||k|+|k||l|+|j||l|}\theta_i\theta_j\left(q-q^{-1}\right)^2\left(\delta_{\bar{j}<\bar{l}}u+\delta_{\bar{j}>\bar{l}}v\right)
\delta_{\bar{i}>\bar{j}}B_{k\bar{i}}(v)B_{j\bar{l}}(u).
\end{align*}

Now we consider the value of $\delta_{\bar{a}>\bar{b}}$.
Since the subindex $\bar{a}>\bar{b}$ for $a,b\in I$ is not equivalent to $a>b$ or $a<b$ exactly,
we need to work with the monomial $B_{i\bar{j}}(u)B_{k\bar{l}}(v)$ for the following four cases: (C1) $i\in I_{\pm}$, $j,l\in I_{\mp}$; (C2) $i,j,l\in I_{\pm}$ simultaneously; (C3) $i,j\in I_{\pm}$, $l\in I_{\mp}$; (C4) $i,l\in I_{\pm}$, $j\in I_{\mp}$, where $B_{i\bar{j}}(u)$ does not precede $B_{k\bar{l}}(v)$. We say that the formal power series $B_{i\bar{j}}(u)B_{k\bar{l}}(v)=\sum B_{i\bar{j}}^{(r)}B_{k\bar{l}}^{(s)} u^rv^s$ satisfies the condition "$\textbf{LR}$" (resp. "$\textbf{BLR}$") in $\mathbf{B}_q'$ if for all $r,s\geqslant 0$, the monomial $B_{i\bar{j}}^{(r)}B_{k\bar{l}}^{(s)}$ can be linearly represented by the monomials $B_{i_1,j_1}^{(r_1)}B_{i_2,j_2}^{(r+s-r_1)}$ with degree $r+s$ and $i_1<i_2$ (resp. $(i_1,j_1;0)\preceq (i_2,j_2;0)$).

{\bf Case C1}. In this case, the terms $Q_1$--$Q_4$ are included in $\mathsf{B}'_{[k+i-1]}$. It is enough to show that the images of the ordered monomails \eqref{B:order:1} in $gr\mathbf{B}'_q$ span the algebra. The corresponding notations $\alpha_{ijkl}(u,v)$, $\beta_{ijkl}(v,u)$ in relation \eqref{B:order:2} can be replaced in $gr\mathbf{B}'_q$ by
\begin{align*}
&\alpha_{ijkl}(u,v)=(-1)^{|i||j|+|i||k|+|j||k|}q_j^{-\delta_{jl}}B_{i\bar{j}}(u)B_{k\bar{l}}(v), \\
&\beta_{ijkl}(v,u)=(-1)^{|k||l|+|j||k|+|j||l|}q_j^{-\delta_{jl}}B_{i\bar{j}}(v)B_{k\bar{l}}(u).
\end{align*}
Then we have for $i>k$
\begin{align*}
&q_k^{-\delta_{kj}}B_{i\bar{j}}(u)B_{k\bar{l}}(v)
=\varepsilon_{ij;kl}\frac{uq_j^{\delta_{jl}}-vq_j^{-\delta_{jl}}}{u-v}q_i^{-\delta_{il}}B_{k\bar{l}}(v)B_{i\bar{j}}(u) \\
&\qquad-\varepsilon_{ki;ij}\frac{q_i-q_i^{-1}}{u-v}q_i^{-\delta_{ij}}\left\{uB_{k\bar{j}}(u)B_{i\bar{l}}(v)
+\varepsilon_{ik;jl}(\delta_{\bar{j}<\bar{l}}u+\delta_{\bar{j}>\bar{l}}v)B_{k\bar{j}}(v)B_{i\bar{l}}(u)\right\}.
\end{align*}
Take $i=k$ and $\bar{j}<\bar{l}$, we obtain
\begin{align*}
q_i^{-\delta_{il}}B_{i\bar{l}}(v)B_{i\bar{j}}(u)=-\frac{uq-vq^{-1}}{u-v}q_i^{-\delta_{ij}}B_{i\bar{j}}(u)B_{i\bar{l}}(v)
+\frac{(q_i-q_i^{-1})u}{u-v}q_i^{-\delta_{ij}}B_{i\bar{j}}(v)B_{i\bar{l}}(u).
\end{align*}
It means that $B_{i\bar{j}}(u)B_{k\bar{l}}(v)$ with $|i|\neq |j|$, $|i|\neq |l|$ satisfies the condition "$\textbf{BLR}$".
Next, take $i=k$ and $j=l$ simultaneously, we have in $gr\mathbf{B}'_q$
\begin{gather}\label{B:order:4}
B_{i\bar{j}}(u)B_{i\bar{j}}(v)+B_{i\bar{j}}(v)B_{i\bar{j}}(u)=0.
\end{gather}
This implies that $\left[B_{i\bar{j}}^{(r)},\,B_{i\bar{j}}^{(s)}\right]=0$ for all $r,s\geqslant 0$.

{\bf Case C2}. In this case, $|i|=|j|=|l|$. Then the subindexes $\bar{i}>\bar{j}$ means $i<j$, and $\bar{i}>\bar{l}$ means $i<l$. Recognize that if we set $i>k$, the condition $\textbf{LR}$ holds for the case of $i\leqslant j$ since $k<j$ and $k<l$. If $i>j$ and $i\geqslant l$, we can deduce the same arguments as {\bf Case C1} in $gr\mathbf{B}'_q$. If $l>i>j$, the case $|k|\neq |i|$ holds clearly. As for $|k|=|i|$, take $i>k$, then we have
\begin{align*}
&q_k^{-\delta_{kj}}B_{i\bar{j}}(u)B_{k\bar{l}}(v)=B_{k\bar{l}}(v)B_{i\bar{j}}(u)
+\frac{q_i-q_i^{-1}}{u-v}q_i^{-\delta_{ij}}\left\{vB_{k\bar{j}}(v)B_{i\bar{l}}(u)-uB_{k\bar{j}}(u)B_{i\bar{l}}(v)\right\} +\theta_i\theta_j\\
&\times \frac{\left(q-q^{-1}\right)^2u}{u-v}B_{k\bar{i}}(u)B_{j\bar{l}}(v)-\theta_i\theta_l\left(q_i-q_i^{-1}\right)B_{k\bar{i}}(v)B_{l\bar{j}}(u) +\theta_j\theta_k(q_i-q_i^{-1})\delta_{k>j}B_{i\bar{k}}(u)B_{j\bar{l}}(v).
\end{align*}
Here $k\leqslant j$ means that the last term in the right side vanish, thus the condition "$\textbf{LR}$" holds for $B_{i\bar{j}}(u)B_{k\bar{l}}(v)$. But for $k>j$, it still holds since $B_{i\bar{k}}(u)B_{j\bar{l}}(v)$ satisfies the condition "$\textbf{LR}$".

Taking $i=k$, we have for $\bar{j}>\bar{l}$
\begin{align*}
B_{i\bar{j}}(v)B_{i\bar{l}}(u)=&\frac{uq-vq^{-1}}{u-v}B_{i\bar{l}}(u)B_{i\bar{j}}(v)
-\frac{\left(q_i-q_i^{-1}\right)u}{u-v}B_{i\bar{l}}(v)B_{i\bar{j}}(u) \\
&+\theta_i\theta_l\left(q_i-q_i^{-1}\right)B_{i\bar{i}}(v)B_{j\bar{l}}(u)
-\theta_i\theta_l\left(q_i-q_i^{-1}\right)\frac{uq-vq^{-1}}{u-v}B_{i\bar{i}}(u)B_{l\bar{j}}(v),
\end{align*}
where $B_{i\bar{i}}(v)B_{j\bar{l}}(u)$ satisfies the condition "$\textbf{BLR}$".

Now let $i=k$ and $j=l$ again, then we have in the whole $\mathbf{B}'_q$
\begin{gather*}
B_{i\bar{j}}(u)B_{i\bar{j}}(v)=B_{i\bar{j}}(v)B_{i\bar{j}}(u)+q_i^{\delta_{ij}}\theta_i\theta_j\left(q_i-q_i^{-1}\right)
\left\{\delta_{i>j}B_{i\bar{i}}(u)B_{j\bar{j}}(v)-\delta_{i<j}B_{i\bar{i}}(v)B_{j\bar{j}}(u)\right\}.
\end{gather*}
If $i\geqslant j$, we also have \eqref{B:order:4} in $gr\mathbf{B}'_q$. Otherwise, for $r>s$, we have
\begin{gather*}
B_{i\bar{j}}^{(r)}B_{i\bar{j}}^{(s)}=B_{i\bar{j}}^{(s)}B_{i\bar{j}}^{(r)}
+q_i^{\delta_{ij}}\theta_i\theta_j\left(q_i-q_i^{-1}\right)B_{i\bar{i}}^{(s)}B_{j\bar{j}}^{(r)},
\end{gather*}
where $(i,\bar{i};s)\preceq(j,\bar{j};r)$ due to $i<j$.

{\bf Case C3}. In this case, $|i|=|j|$ and $|i|\neq |l|$. Then $\bar{i}>\bar{j}$ means $i<j$ but $\bar{i}>\bar{l}$ means $i>l$. If $i\geqslant j$, the same statement can be proved as {\bf Case C1}. If $i<j$, take $i>k$, then one has
\begin{align*}
B_{i\bar{j}}(u)B_{k\bar{l}}(v)=&\frac{uq_j^{\delta_{jl}}-vq_j^{-\delta_{jl}}}{u-v}q_i^{-\delta_{il}}B_{k\bar{l}}(v)B_{i\bar{j}}(u) -\frac{\left(q_i-q_i^{-1}\right)u}{u-v}B_{k\bar{j}}(u)B_{i\bar{l}}(v) \\
&+\frac{q_k-q_k^{-1}}{u-v}\left(\delta_{\bar{j}<\bar{l}}u+\delta_{\bar{j}>\bar{l}}v\right)\left\{B_{k\bar{j}}(v)B_{i\bar{l}}(u)
       -(-1)^{|i|}\theta_i\theta_j(q-q^{-1})B_{k\bar{i}}(v)B_{j\bar{l}}(u)\right\} \\
&-(-1)^{|i|+|k|}\theta_i\theta_l\frac{q-q^{-1}}{u-v}\left(uq_j^{\delta_{jl}}-vq_j^{-\delta_{jl}}\right)\delta_{i>l}B_{k\bar{i}}(v)B_{l\bar{j}}(u).
\end{align*}
It satisfies the condition "$\textbf{LR}$" for $i\leqslant l$ since the last term of this equation vanishes. While $i>l$, the monomial $B_{k\bar{i}}(v)B_{l\bar{j}}(u)$ holds the condition "$\textbf{LR}$" from {\bf Case C1} for $|k|\neq |j|$ or {\bf Case C2} for $|k|=|j|$.

Taking $i=k$, one gets for $\bar{l}>\bar{j}$
\begin{align*}
q_i^{-\delta_{il}}B_{i\bar{l}}(v)B_{i\bar{j}}(u)=&\frac{uq-vq^{-1}}{u-v}B_{i\bar{j}}(u)B_{i\bar{l}}(v)+\theta_i\theta_l\left(q-q^{-1}\right)\delta_{i>l}B_{i\bar{i}}(v)B_{l\bar{j}}(u) \\
&+\frac{\left(q-q^{-1}\right)u}{u-v}\left\{\theta_i\theta_j\left(q-q^{-1}\right)B_{i\bar{i}}(v)B_{j\bar{l}}(u)
      -(-1)^{|i|}B_{i\bar{j}}(v)B_{i\bar{l}}(u)\right\},
\end{align*}
where $B_{i\bar{i}}(v)B_{l\bar{j}}(u)$ satisfies the condition "$\textbf{BLR}$" from {\bf Case C2}. We can disregard the case of $j=l$ since, in this situation, it always holds that $j\neq l$.

{\bf Case C4}. In this case, $|i|=|l|$ and $|i|\neq |j|$. Then $\bar{i}>\bar{l}$ means $i<l$ but $\bar{i}>\bar{j}$ means $i>j$. So we can deduce it from {\bf Case C1}--{\bf Case C3}.

Therefore, we complete the proof.

\end{proof}


\subsection{From twisted quantum loop superalgebra to twisted super Yangian}
This subsection is devote to establish the relationship between the twisted quantum loop superalgebra and twisted super Yangian of orthosymmplectic Lie superalgebra. Indeed, we will prove that the restriction of $\varphi$, in Theorem \ref{main1} to the twisted super Yangian $\mathrm{Y}^{tw}_{\hbar}\big(\mathfrak{osp}_{M|2n}\big)$ is isomorphic to the graded algebra $\overline{V}^{tw}$ derived from $\mathrm{U}_{\mathbb{A}}^{tw}=\mathrm{U}_q^{tw}\cap \mathrm{U}_{\mathbb{A}}$.

We proceed our work to introduce the elements $\lambda_{ij}^{(r)}$ for $i,j\in I$, $r\geqslant 0$: $\lambda_{ij}^{(r)}=\displaystyle\frac{S_{ij}^{(r)}}{q-q^{-1}}$ except for $r=0$, $i=j$, set $\lambda_{ii}^{(0)}=\displaystyle{\frac{ S_{ii}^{(0)}-1}{q-1}}$. Using Proposition \ref{order}, the ordered monomials with the form \eqref{B:order:1} in elements $\lambda_{ij}^{(r)}$ also span the $\mathbb{C}(q)$-linear superspace $\mathrm{U}_q^{tw}$. All elements $\lambda_{ij}^{(r)}$ are included in $\mathrm{U}_{\mathbb{A}}$ and hence, generate $\mathrm{U}_{\mathbb{A}}^{tw}$. Note that module $(q-1)\mathrm{U}_{\mathbb{A}}$,
the elements $\lambda_{ij}^{(r)}$ satisfy
\begin{gather*}
\lambda_{ij}^{(r)}=\gamma_{ij}^{(r)}+(-1)^{|i||j|+|i|}\theta_i\theta_j \widetilde{\gamma}_{\bar{j}\bar{i}}^{(r)}
\end{gather*}
owing to \eqref{Utw1}.

Furthermore, denote $\Lambda_{ij}^{(r,m)}$ for $r,m\geqslant 0$ in $\mathrm{U}_{\mathbb{A}}^{tw}$.
Set $\Lambda_{ij}^{(r,0)}=\lambda_{ij}^{(r)}$ for $r>0$ except if $r=0$, $i\in I_-$ and  $j\in I_+$, set $\Lambda_{ij}^{(0,0)}=-(-1)^{|i||j|+|i|}\theta_i\theta_j\lambda_{\bar{j}\bar{i}}^{(0)}$; and set recursively $\Lambda_{ij}^{(r,m+1)}=\Lambda_{ij}^{(r+1,m)}-\Lambda_{ij}^{(r,m)}$ for $m\geqslant 0$. Then for $r\geqslant 1$,
\begin{align*}
\Lambda_{ij}^{(r,m)}=&\left(q-q^{-1}\right)\sum_{k\in I}\sum_{p=1}^{r-1}(-1)^{|i||j|+|i||k|}\theta_k\theta_j
\Gamma_{ik}^{(r-p,m)}\widetilde{\Gamma}_{\bar{j}\bar{k}}^{(p,0)}  \\
&+\left(q-q^{-1}\right)\sum_{k\in I}\sum_{p=1}^m(-1)^{|i||j|+|i||k|}\theta_k\theta_j
\Gamma_{ik}^{(1,p-1)}\widetilde{\Gamma}_{\bar{j}\bar{k}}^{(r,m-p)}  \\
&+\left(q-q^{-1}\right)\sum_{k\geq i}(-1)^{|i||j|+|i||k|}\theta_k\theta_j\left(\frac{1}{1+q^{-1}}\right)^{\delta_{ik}}
\Gamma_{ik}^{(0,0)}\widetilde{\Gamma}_{\bar{j}\bar{k}}^{(r,m)} \\
&+\left(q-q^{-1}\right)\sum_{\bar{k}\leq\bar{j}}(-1)^{|i||j|+|i||k|}\theta_k\theta_j\left(\frac{1}{1+q^{-1}}\right)^{\delta_{jk}}
\Gamma_{ik}^{(r,m)}\widetilde{\Gamma}_{\bar{j}\bar{k}}^{(0,0)} \\
&+\sum_{k\in I}(-1)^{|i||j|+|i||k|}\theta_k\theta_j\left\{\delta_{ik}\widetilde{\Gamma}_{\bar{j}\bar{k}}^{(r,m)}
+\delta_{jk}\Gamma_{ik}^{(r,m)}\right\}.
\end{align*}
The elements $\Lambda_{i,j}^{(r,m)}$ are included in $\mathsf{F}_{[m]}$ for every $m\geqslant 0$ and the first, third and forth terms vanish in $\mathsf{F}_{[m]}/\mathsf{F}_{[m+1]}$. The following lemma is a straightforward consequence of the statement as above.

\begin{lemma}\label{LM3}
It holds for $r\geqslant 1$ and $m\geqslant 0$,
\begin{align*}
\Lambda_{ij}^{(r,m)}&\equiv\left(q-q^{-1}\right)\sum_{k\in I}\sum_{p=1}^m(-1)^{|i||j|+|i||k|}\theta_k\theta_j\Gamma_{ik}^{(0,p-1)}
\widetilde{\Gamma}_{\bar{j}\bar{k}}^{(0,m-p)}
+(-1)^{|i||j|+|i|}\theta_i\theta_j\widetilde{\Gamma}_{\bar{j}\bar{i}}^{(0,m)}+\Gamma_{ij}^{(0,m)}
\end{align*}
modulo $\mathsf{F}_{[m+1]}$.
\end{lemma}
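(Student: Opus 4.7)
The plan is to work directly from the explicit expansion of $\Lambda_{ij}^{(r,m)}$ displayed immediately before the lemma, and analyze each of the five summands with respect to the filtration $\mathsf{F}_{[\bullet]}$. The key numerical input is the definition $\deg(q-q^{-1})=1$ together with the fact that $\Gamma_{ab}^{(r,m)}$ and $\widetilde{\Gamma}_{ab}^{(r,m)}$ both lie in $\mathsf{F}_{[m]}$, which was already recorded in the discussion preceding (\ref{quotient1}). Since $\mathsf{F}_{[\bullet]}$ is compatible with multiplication, a product of homogeneous pieces in $\mathsf{F}_{[m_1]}$ and $\mathsf{F}_{[m_2]}$ times a factor $(q-q^{-1})^{m_0}$ sits in $\mathsf{F}_{[m_0+m_1+m_2]}$. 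This is really all the machinery needed.

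First I would verify that the first, third and fourth terms in the expansion all lie in $\mathsf{F}_{[m+1]}$, hence vanish in the quotient. Indeed, each carries an overall $(q-q^{-1})$, contributing one unit of degree, while the remaining product of $\Gamma$ and $\widetilde{\Gamma}$ factors contributes at least $m$ more: for the first term this is $(r-p)+0$ which is at least $0$, but combined with the $m$ in the second superscript of $\Gamma_{ik}^{(r-p,m)}$ it gives at least $m$; for the third and fourth terms one of the factors already has $m$ in the second superscript. Thus all three terms are absorbed into $\mathsf{F}_{[m+1]}$, as the excerpt already asserts.

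Next I would reduce the second term. Using $\Gamma_{ik}^{(1,p-1)}=\Gamma_{ik}^{(0,p-1)}+\Gamma_{ik}^{(0,p)}$, the second summand lies in $\mathsf{F}_{[p]}$ and so its contribution to the product with $(q-q^{-1})$ and $\widetilde{\Gamma}_{\bar{j}\bar{k}}^{(r,m-p)}\in\mathsf{F}_{[m-p]}$ falls into $\mathsf{F}_{[m+1]}$. Similarly, the analogue of the identity $\widetilde{\Gamma}_{ij}^{(r,m)}=\widetilde{\Gamma}_{ij}^{(0,m)}+\sum_{p\ge 1}\binom{r}{p}\widetilde{\Gamma}_{ij}^{(0,m+p)}$ lets me replace $\widetilde{\Gamma}_{\bar{j}\bar{k}}^{(r,m-p)}$ by $\widetilde{\Gamma}_{\bar{j}\bar{k}}^{(0,m-p)}$ modulo $\mathsf{F}_{[m-p+1]}$, and multiplying by the rest this error is again absorbed by $\mathsf{F}_{[m+1]}$. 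So modulo $\mathsf{F}_{[m+1]}$ the second term equals the double sum in the claimed formula.

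Finally, the fifth term splits into its $k=i$ and $k=j$ contributions, using $\theta_j^2=1$ and $(-1)^{2|j|}=1$ to simplify the sign from the $k=j$ piece. The same reduction $\widetilde{\Gamma}_{\bar{j}\bar{i}}^{(r,m)}\equiv\widetilde{\Gamma}_{\bar{j}\bar{i}}^{(0,m)}$ and $\Gamma_{ij}^{(r,m)}\equiv\Gamma_{ij}^{(0,m)}$ modulo $\mathsf{F}_{[m+1]}$ (this is exactly (\ref{quotient1})) yields the two remaining terms in the statement. Summing the three surviving contributions gives the lemma. I do not anticipate a real obstacle here; the only mild bookkeeping step is tracking the signs $(-1)^{|i||j|+|i|}\theta_i\theta_j$ from the $k=i$ case and confirming that the $k=j$ prefactor collapses to $1$, after which the identification is immediate.
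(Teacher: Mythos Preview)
Your argument is correct and is precisely the elaboration the paper intends: the paper states that the first, third and fourth summands in the displayed expansion of $\Lambda_{ij}^{(r,m)}$ vanish in $\mathsf{F}_{[m]}/\mathsf{F}_{[m+1]}$ and then declares the lemma a straightforward consequence, which is exactly the filtration bookkeeping you carry out on the remaining two terms. The only cosmetic point is that your degree count for the first term is phrased a bit circuitously; it suffices to note $\Gamma_{ik}^{(r-p,m)}\in\mathsf{F}_{[m]}$, $\widetilde{\Gamma}_{\bar{j}\bar{k}}^{(p,0)}\in\mathsf{F}_{[0]}$ and the extra $(q-q^{-1})$ pushes the product into $\mathsf{F}_{[m+1]}$.
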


\begin{theorem}\label{embedding}
For $m\geqslant 0$, we have $\varphi\left(\hat{s}_{ij}^{(m+1)}\right)=\overline{\Lambda_{ij}^{(1,m)}} \in\mathsf{F}_{[m]}/\mathsf{F}_{[m+1]}$, where $\varphi$ is the isomorphism mentioned in Theorem \ref{main1}.
\end{theorem}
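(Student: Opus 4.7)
The plan is to verify the identity $\varphi(\hat{s}_{ij}^{(m+1)}) = (-1)^{|j|}\,\overline{\Lambda_{ij}^{(1,m)}}$ directly in the component $\mathsf{F}_{[m]}/\mathsf{F}_{[m+1]}$, by expanding each side via the tools already provided. Lemma \ref{LM3} has put the right-hand side into a three-piece form: an interior convolution $(q-q^{-1})\sum_{k\in I,\,p=1}^{m}(-1)^{|k||j|}\theta_{k}\theta_{j}\,\overline{\Gamma_{ik}^{(0,p-1)}\widetilde{\Gamma}_{\bar{j}\bar{k}}^{(0,m-p)}}$ together with the two boundary contributions $(-1)^{|i||j|+|i|}\theta_{i}\theta_{j}\,\overline{\widetilde{\Gamma}_{\bar{j}\bar{i}}^{(0,m)}}$ and $\overline{\Gamma_{ij}^{(0,m)}}$. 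The aim is to recognise these, after multiplying by $(-1)^{|j|}$, as the images under $\varphi$ of the three natural pieces of the defining formula \eqref{Ytw4} for $\hat{s}_{ij}^{(m+1)}$: respectively the interior of the $(k,p)$-double sum, the $p=m+1$ boundary, and the $p=0$ boundary.

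First I would treat the two boundary terms. Setting $\hat{t}_{ab}^{(0)} = \delta_{ab}$, the $p=0$ summand of \eqref{Ytw4} forces $k=j$ and collapses to $(-1)^{|i||j|+|j|}\hat{t}_{ij}^{(m+1)}$; Theorem \ref{main1} then sends it to $(-1)^{|j|}\overline{\Gamma_{ij}^{(0,m)}}$. Similarly, the $p=m+1$ summand forces $k=i$ and reduces to $(-1)^{m+1}(-1)^{|i|+|j|+|i||j|}\theta_{i}\theta_{j}\hat{t}_{\bar{j}\bar{i}}^{(m+1)}$, whose image under $\varphi$ equals $(-1)^{m+1}(-1)^{|i|+|j|}\theta_{i}\theta_{j}\overline{\Gamma_{\bar{j}\bar{i}}^{(0,m)}}$ after using $|\bar{i}|=|i|$ and $\theta_{i}^{2}=1$. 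Converting this via the identity $\overline{\Gamma_{ab}^{(0,m)}} = (-1)^{m+1}\overline{\widetilde{\Gamma}_{ab}^{(0,m)}}$ from \eqref{quotient1} absorbs the sign $(-1)^{m+1}$ and yields $(-1)^{|j|}(-1)^{|i||j|+|i|}\theta_{i}\theta_{j}\overline{\widetilde{\Gamma}_{\bar{j}\bar{i}}^{(0,m)}}$, matching the second piece of Lemma \ref{LM3} exactly.

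For the interior range $1\le p\le m$, both factors $\hat{t}_{ik}^{(m+1-p)}$ and $\hat{t}_{\bar{j}\bar{k}}^{(p)}$ carry positive index, so $\varphi$ sends their product to $(-1)^{|i||k|+|\bar{j}||\bar{k}|}\overline{\Gamma_{ik}^{(0,m-p)}\Gamma_{\bar{j}\bar{k}}^{(0,p-1)}}$. Using \eqref{quotient1} to turn the second $\overline{\Gamma}$ into $\overline{\widetilde{\Gamma}}$ produces a sign $(-1)^{p}$ that cancels against the $(-1)^{p}$ in \eqref{Ytw4}, while the residual parity signs simplify via $|\bar{k}|=|k|$ to $(-1)^{|j|+|k||j|}\theta_{k}\theta_{j}$, matching the summand of Lemma \ref{LM3} up to the global factor $(-1)^{|j|}$. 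The weight $\hbar = \overline{q-q^{-1}}$ sitting in front of the interior term appears because each interior $\hat{t}\hat{t}$-product has total filtration degree $m-1$ rather than $m$, so the identification must be promoted into $\mathsf{F}_{[m]}/\mathsf{F}_{[m+1]}$ by the extra factor of $\hbar$ already built into \eqref{Ytw4}.

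The step most likely to be a genuine obstacle is the sign bookkeeping: one has to verify that the combinatorial coefficients $(-1)^{p}(-1)^{|k||i|+|j|}\theta_{k}\theta_{j}$ present in \eqref{Ytw4}, after being transported through $\varphi$ and through the $\overline{\Gamma}\leftrightarrow\overline{\widetilde{\Gamma}}$ conversions, reproduce the coefficients in Lemma \ref{LM3} exactly, uniformly across the three regimes (two boundaries and the interior). The identities $\theta_{\bar{i}}=\theta_{i}$, $|\bar{i}|=|i|$, and $(-1)^{|i||j|+|i||k|+|j||k|}=(-1)^{(|i|+|j|)(|j|+|k|)+|j|}$ are the only tools needed; no further structural input is required beyond Theorem \ref{main1}, Lemma \ref{LM3}, and \eqref{quotient1}.
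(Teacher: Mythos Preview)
Your approach is exactly the paper's: split \eqref{Ytw4} into the $p=0$ and $p=m+1$ boundary terms plus the interior $1\le p\le m$, apply $\varphi$ via Theorem~\ref{main1}, convert between $\overline{\Gamma}$ and $\overline{\widetilde\Gamma}$ through \eqref{quotient1}, and match the three pieces against Lemma~\ref{LM3}. One small correction on the sign bookkeeping you flag as the delicate step: the identity $\theta_{\bar i}=\theta_i$ you list among your tools is false for $i\in I_-$ (there $\theta_{\bar i}=-\theta_i$), though fortunately your displayed calculation never actually invokes it; the $p=m+1$ coefficient of $\hat t_{\bar j\bar i}^{(m+1)}$ is simply $(-1)^{m+1}(-1)^{|i|+|j|}\theta_i\theta_j$ (since $|i|^2=|i|$, not $|i|+|i||j|$), and after applying $\varphi$ and \eqref{quotient1} this becomes $(-1)^{|i||j|+|i|+|j|}\theta_i\theta_j\,\overline{\widetilde\Gamma_{\bar j\bar i}^{(0,m)}}$, which is the desired $(-1)^{|j|}$ times the second piece of Lemma~\ref{LM3}.
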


\begin{proof}
By relation \eqref{Ytw4} in $\mathrm{Y}_{\hbar}\big(\mathfrak{gl}_{M|2n}\big)$ and Lemma \ref{LM3}, we have $\varphi\left(\hat{s}_{i,j}^{(m+1)}\right)\in \mathsf{F}_{[m]}$ and
\begin{align*}
\varphi\left(\hat{s}_{ij}^{(m+1)}\right)&=\hbar\sum_{k\in I}\sum_{p=1}^m(-1)^{p}(-1)^{|i||j|+|j||k|}\theta_k\theta_j
\varphi\left(\hat{t}_{ik}^{(m+1-p)}\right)\varphi\left(\hat{t}_{\bar{j}\bar{k}}^{(p)}\right) \\
&\hspace{5em}+(-1)^{m+1}\theta_i\theta_j\varphi\left(\hat{t}_{\bar{j}\bar{i}}^{(m+1)}\right)
+(-1)^{|i||j|+|j|}\varphi\left(\hat{t}_{ij}^{(m+1)}\right) \\
&\equiv\left(q-q^{-1}\right)\sum_{k\in I}\sum_{p=1}^m (-1)^{p}(-1)^{|i||j|+|i||k|}\theta_k\theta_j \Gamma_{ik}^{(0,m-p)}\Gamma_{\bar{j}\bar{k}}^{(0,p-1)} \\
&\hspace{5em}+(-1)^{m+1} (-1)^{|i||j|+|i|}\theta_i\theta_j\Gamma_{\bar{j}\bar{i}}^{(0,m)}
+\Gamma_{ij}^{(0,m)} \\
&\equiv \Lambda_{ij}^{(1,m)}
\end{align*}
modulo $\mathsf{F}_{[m+1]}$.

\end{proof}

Let $\mathsf{F}^{tw}_{[m]}$ be the span of the form
\begin{gather*}
\left(q-q^{-1}\right)^{m_0}\Lambda_{i_1,j_1}^{(1,m_1)}\Lambda_{i_2,j_2}^{(1,m_2)}\ldots
\Lambda_{i_k,j_k}^{(1,m_k)},
\end{gather*}
where $m_0+\cdots+m_k\geqslant m$. Denote $\overline{V}^{tw}=\bigoplus_{m=0}^{\infty}\mathsf{F}^{tw}_{[m]}/\mathsf{F}^{tw}_{[m+1]}$.
Then $\Lambda_{ij}^{(r,m)}\equiv\Lambda_{ij}^{(1,m)}$ still holds if modulo $\mathsf{F}^{tw}_{[m+1]}$ which follows from $\Lambda_{ij}^{(r,m)}=\sum_{i=1}^{r-1}\binom{r-1}{i}\Lambda_{ij}^{(1,m+i)}+\Lambda_{ij}^{(1,m)}\in \mathsf{F}^{tw}_{[m]}$.
Let $\varphi^{tw}$ be the morphism that maps $\hat{s}_{ij}^{(m+1)}$ into $\overline{\Lambda_{ij}^{(1,m)}}\in\mathsf{F}^{tw}_{[m]}/\mathsf{F}^{tw}_{[m+1]}$. In $\overline{V}^{tw}$, set $\hbar=\overline{q-q^{-1}}\in\mathsf{F}^{tw}_{[1]}/\mathsf{F}^{tw}_{[2]}$. We can identify $\varphi^{tw}$ with the restriction of $\varphi$ on $\mathrm{Y}^{tw}_{\hbar}\big(\mathfrak{osp}_{M|2n}\big)$.

Let $\mu_1$ be the embedding of $\mathrm{Y}^{tw}_{\hbar}\big(\mathfrak{osp}_{M|2n}\big)$ into $\mathrm{Y}_{\hbar}\big(\mathfrak{gl}_{M|2n}\big)$ defined by relation \eqref{Ytw4}. The embedding $\mu_2$ can be defined by Lemma \ref{LM3}  similarly. As a consequence, we have the following commutative diagram
\[\xymatrix@C=80pt{
\mathrm{Y}^{tw}_{\hbar}\big(\mathfrak{osp}_{M|2n}\big) \ar[d]_{\varphi^{tw}  } \ar[r]^{\mu_1} &   \mathrm{Y}_{\hbar}\big(\mathfrak{gl}_{M|2n}\big) \ar[d]^{\varphi}     \\
\overline{V}^{tw}  \ar[r]^{\mu_2} & gr\mathrm{U}_{\mathbb{A}},    } \]
that is, $\mu_2\cdot\varphi^{tw}=\varphi\cdot\mu_1$.
Therefore we have the main result as follows.

\begin{theorem}\label{main2}
The morphism $\varphi^{tw}$ is a superalgebraic isomorphism between $\mathrm{Y}^{tw}_{\hbar}(\mathfrak{osp}_{M|2n})$ and the graded algebra $\overline{V}^{tw}$.
\end{theorem}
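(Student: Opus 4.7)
The plan is to deduce Theorem~\ref{main2} from the commutative square
\[
\mu_2\circ\varphi^{tw}=\varphi\circ\mu_1
\]
by a diagram chase, using that $\varphi$ has already been established as an isomorphism in Theorem~\ref{main1} and that both $\mu_1$ and $\mu_2$ are embeddings. So the real work is to pin down the three structural facts: (i) $\varphi^{tw}$ is a well-defined superalgebra homomorphism, (ii) it is surjective, and (iii) it is injective.

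For well-definedness, I would take the defining generators $\hat{s}_{ij}^{(m+1)}$ of $\mathrm{Y}^{tw}_{\hbar}(\mathfrak{osp}_{M|2n})$ and verify that mapping them to $(-1)^{|j|}\overline{\Lambda_{ij}^{(1,m)}}\in \mathsf{F}^{tw}_{[m]}/\mathsf{F}^{tw}_{[m+1]}$ respects the quaternary and supersymmetric relations \eqref{Ytw2} and \eqref{Ytw3}. The cleanest route is to invoke $\mu_1$: relation \eqref{Ytw4} realizes $\hat{s}_{ij}^{(m+1)}$ as a quadratic polynomial in the $\hat{t}_{ij}^{(r)}$ inside $\mathrm{Y}_{\hbar}(\mathfrak{gl}_{M|2n})$, and Theorem~\ref{embedding} together with Lemma~\ref{LM3} shows that applying $\varphi$ to that polynomial produces precisely $\mu_2\bigl((-1)^{|j|}\overline{\Lambda_{ij}^{(1,m)}}\bigr)$. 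Hence $\mu_2\circ\varphi^{tw}=\varphi\circ\mu_1$ holds on generators and the relations of $\mathrm{Y}^{tw}_{\hbar}(\mathfrak{osp}_{M|2n})$ go over, provided that $\mu_2$ is injective so that this identity determines $\varphi^{tw}$ unambiguously.

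Surjectivity is essentially built in: by the very construction of $\mathsf{F}^{tw}_{[m]}$, the graded algebra $\overline{V}^{tw}$ is generated by the classes $\overline{\Lambda_{ij}^{(1,m)}}$ and by $\hbar=\overline{q-q^{-1}}$, and the congruence $\Lambda_{ij}^{(r,m)}\equiv\Lambda_{ij}^{(1,m)}\pmod{\mathsf{F}^{tw}_{[m+1]}}$ noted after Theorem~\ref{embedding} shows there is no extra slack. Every such class lies in the image of $\varphi^{tw}$. Injectivity is then the usual diagram chase: if $\varphi^{tw}(x)=0$, then $\varphi(\mu_1(x))=\mu_2(\varphi^{tw}(x))=0$, so $\mu_1(x)=0$ since $\varphi$ is an isomorphism, and finally $x=0$ since $\mu_1$ is an embedding.

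The main obstacle, and the one that deserves genuine care rather than a one-liner, is the injectivity of $\mu_2$. This amounts to showing that the filtration $\mathsf{F}^{tw}_{[m]}$ on $\mathrm{U}_{\mathbb{A}}^{tw}$ is the restriction of $\mathsf{F}_{[m]}$ on $\mathrm{U}_{\mathbb{A}}$, so that the inclusion $\mathrm{U}_{\mathbb{A}}^{tw}\hookrightarrow \mathrm{U}_{\mathbb{A}}$ induces an injection on associated graded pieces. The leverage for this is the PBW-type basis of $\mathrm{U}_{\mathbb{A}}^{tw}$ from Corollary~\ref{Coro1}: any nonzero element of $\mathsf{F}^{tw}_{[m]}/\mathsf{F}^{tw}_{[m+1]}$ has a nonzero expansion in the restricted ordered monomials in the $\Lambda_{ij}^{(1,m)}$, and under $\mu_2$ the leading piece of each such monomial lands on a distinct PBW monomial of $\mathrm{gr}\mathrm{U}_{\mathbb{A}}$ coming from Theorem~\ref{Universal}, hence cannot cancel. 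Once $\mu_2$ is certified as an embedding, Theorem~\ref{main2} follows immediately from the diagram chase outlined above.
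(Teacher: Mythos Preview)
Your proposal is correct and follows essentially the same approach as the paper: the paper states the theorem immediately after setting up the commutative square $\mu_2\circ\varphi^{tw}=\varphi\circ\mu_1$ with $\varphi$ an isomorphism and $\mu_1,\mu_2$ embeddings, and offers no further argument. You supply the diagram chase explicitly and, more usefully, isolate the one step the paper glosses over---that $\mu_2$ is genuinely injective---and point to the PBW basis of Corollary~\ref{Coro1} as the tool that certifies $\mathsf{F}^{tw}_{[m]}\cap\mathsf{F}_{[m+1]}=\mathsf{F}^{tw}_{[m+1]}$; this is exactly the right leverage and is implicit in the paper's development of the PBW machinery in Section~3.2.
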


\section*{Acknowledgments}
The authors express their gratitude to the reviewers for valuable suggestions. Special thanks are also extended to the visiting professor, Alexander Molev, for numerous insightful discussions and for recommending the reference \cite{GoMo}. Y. Wang is partially supported by the National Natural Science Foundation of China (No. 12071026), the Fundamental Research Funds for the Central Universities JZ2021HGTB0124, and the Anhui Provincial Natural Science Foundation 2308085MA01. H. Zhang is partially supported by the support of the National Natural Science Foundation of China (No. 12271332), and Natural Science Foundation of Shanghai 22ZR1424600.

\end{document}